\title{On the eigenvalue distribution of spatio-spectral limiting operators in higher dimensions, II}
\author{Kevin Hughes, Arie Israel, and  Azita Mayeli}
\date{\today}
\numberwithin{equation}{section}
\newcommand\blfootnote[1]{%
  \begingroup
  \renewcommand\thefootnote{}\footnote{#1}%
  \addtocounter{footnote}{-1}%
  \endgroup
}
\theoremstyle{plain}
\newtheorem{theorem}{Theorem}[section]
\newtheorem{lemma}[theorem]{Lemma}
\newtheorem{proposition}[theorem]{Proposition}
\theoremstyle{definition}
\newtheorem{case[theorem]}{Case}
\theoremstyle{remark}
\newtheorem{remark}[theorem]{Remark}
\numberwithin{equation}{section}
\def\R{\Bbb R}
\newcommand{\Z}{\mathbb Z}
\newcommand{\N}{\mathbb N}
\newcommand{\cW}{\mathcal W}
\newcommand{\cH}{\mathcal H}
\newcommand{\cQ}{\mathcal Q}
\newcommand{\cS}{\mathcal S}
\newcommand{\tr}{\mathrm{tr}}
\newcommand{\op}{\mathrm{op}}
\def\R{\mathbb{R}}
\begin{document}

\maketitle

\begin{abstract}
Let $F$, $S$ be bounded measurable sets in $\R^d$. Let $P_F : L^2(\R^d) \rightarrow L^2(\R^d) $ be the orthogonal projection on the subspace of functions with compact support on $F$, and let $B_S : L^2(\R^d) \rightarrow L^2(\R^d)$ be the orthogonal projection  on the subspace of functions with Fourier transforms having compact support on $S$. In this paper, we derive improved distributional estimates on the eigenvalue sequence $1 \geq \lambda_1(F,S) \geq \lambda_2(F,S) \geq \cdots > 0$ of the \emph{spatio-spectral limiting operator} $B_S P_F B_S : L^2(\R^d) \rightarrow L^2(\R^d)$. The significance of such estimates lies in their diverse applications in medical imaging, signal processing, geophysics and astronomy.  

For suitable domains $F$ and $S$, we prove that
\[
\# \{ k : \lambda_k(F,S) > \epsilon \}  = (2 \pi)^{-d} |F| \cdot |S| + \mathrm{Err}(F,S,\epsilon) \quad \mbox{for any } \epsilon \in (0,1),
\]
where $|F| \cdot |S|$ represents the Lebesgue measure of the domain $F \times S \subset \R^d \times \R^d$, and the error term satisfies the following bound:
\begin{align}\notag
|\mathrm{Err}(F,S,\epsilon)| \leq  C_d \frac{\cH_{d-1}(\partial F)}{\kappa_{\partial F}} \frac{\cH_{d-1}(\partial S)}{\kappa_{\partial S}}  \biggl\{ &  \log\left(\cH_{d-1}(\partial F)  \cH_{d-1}(\partial S)\right)  \log(\min \{ \epsilon, 1-\epsilon\}^{-1})^d  \\
\notag
& + \log \left(\cH_{d-1}(\partial F)  \cH_{d-1}(\partial S)\right)^3 \log(\min \{ \epsilon, 1-\epsilon\}^{-1}) \biggr\}
\end{align}
where $\cH_{d-1}(\partial F)$ and $\cH_{d-1}(\partial S)$ denote the $(d-1)$-dimensional Hausdorff measures of the boundaries of $F$ and $S$, respectively, while $\kappa_{\partial F}$ and $\kappa_{\partial S} \in (0,1]$ are geometric constants related to an Ahlfors regularity condition on the domain boundaries. When $F$ and $S$ are Euclidean balls, we expect this estimate to be sharp up to logarithmic factors. This improves on recent work of Marceca-Romero-Speckbacher \cite{MaRoSp23} which showed that $|\mathrm{Err}(F,S,\epsilon)| \leq  C_{d,\alpha} \frac{\cH_{d-1}(\partial F)}{\kappa_{\partial F}} \frac{\cH_{d-1}(\partial S)}{\kappa_{\partial S}}  \log\left(\frac{\cH_{d-1}(\partial F)  \cH_{d-1}(\partial S)}{\kappa_{\partial F} \min \{ \epsilon, 1-\epsilon \}}\right)^{2d(1+\alpha)+1}$ for any $\alpha >0$. 

Our proof is based on the decomposition techniques developed in \cite{MaRoSp23}. The novelty of our approach is in the use of a two-stage dyadic decomposition with respect to both the spatial and frequency domains, and the application of the results in \cite{ArieAzita23} on the eigenvalues of spatio-spectral limiting operators associated to cubical domains. 
\end{abstract}

\blfootnote{A.~Israel was supported by the Air Force Office of Scientific Research, under award FA9550-19-1-0005.  A.~Mayeli was supported in part by AMS-Simons Research Enhancement Grant and the PSC-CUNY research grants.}


\section{Introduction}

Let $F, S\subset \R^d$ be Borel measurable sets of finite Lebesgue measure. Define the {\it spacelimiting operator} $P_F : L^2(\R^d) \rightarrow L^2(\R^d)$ and {\it bandlimiting operator} $B_S : L^2(\R^d) \rightarrow L^2(\R^d)$ by
$P_F[f] := \chi_F f$, and $ B_S[f] := \mathcal{F}^{-1}[ \chi_S \widehat{f}]$, where $\widehat{f} \in L^2(\R^d)$ denotes the Fourier transform of $f \in L^2(\R^d)$, $\mathcal{F}^{-1}:  L^2(\R^d) \rightarrow L^2(\R^d)$ is the inverse Fourier transform, and $\chi_A$ is the indicator function of a  set $A \subset \R^d$. These operators are orthogonal projections on $L^2(\R^d)$ and they act on a function by restricting it to a given region in the spatial or frequency domain. 
We say that $f \in L^2(\R^d)$ is $F$-spacelimited if $P_F f = f$, and $f$ is $S$-bandlimited if $B_S f = f$.

By forming an alternating product of the spacelimiting and bandlimiting operators, we obtain a self-adjoint operator $T_{F,S}:= B_{S} P_F B_{S}$ on $L^2(\R^d)$, which we refer to as a {\it spatio-spectral limiting operator} (SSLO) associated to the {\it spatio-spectral} region $F \times S \subset \R^d \times \R^d$.

One form of the uncertainty principle asserts that a function and its Fourier transform cannot simultaneously have compact support. (For other manifestations of the uncertainty principle, see Remark \ref{UP1} below.) 
Quantitative variants of the uncertainty principle seek to determine the extent to which a function can be concentrated on a set $F$ while having Fourier transform concentrated on another set $S$. More concretely, let us fix Borel sets $F, S \subset \R^d$ and consider the optimization problem:
\begin{equation}\label{opt1:problem}
\lambda_1(F,S) = \max_{f \in L^2(\R^d)} \{ \| P_F f \|_{L^2(\R^d)}^2 : f = B_S f, \| f \|_{L^2(\R^d)} = 1 \}.
\end{equation}

That is, we ask to identify an $L^2$-normalized $S$-bandlimited function $f$ having maximal $L^2$ norm on the domain $F$. Observe that we can write $\| P_F f \|_{L^2(\R^d)}^2 = \langle P_Ff,  f \rangle = \langle P_F B_S f, B_S f \rangle = \langle B_S P_F B_S f, f \rangle$ provided $B_S f = f$. Thus, by the variational characterization of eigenvalues (\cite{Bell1, read1980functional}), the quantity $\lambda_1(F,S)$  is the largest eigenvalue of the operator $T_{F,S}= B_SP_FB_S$.  
Notice that, any bound of the form $\lambda_1(F,S)\leq c<1 $ may be considered an expression of the uncertainty principle; see \cite{donoho1989uncertainty}.

In this paper, we investigate the behavior of the entire eigenvalue sequence $(\lambda_k(F,S))_{k \geq 1}$ of $T_{F,S}$. 
Here, we shall write $1 \geq \lambda_1(F,S) \geq \lambda_2(F,S) \geq \cdots > 0$ for the non-increasing arrangement of the positive eigenvalues of ${T}_{F,S}$ (counted with multiplicity).  
A fundamental property is established in  \cite{Landau67}: Using the representation of $T_{F,S}$ as an integral operator and applying Mercer's theorem, it is true that
\begin{equation}
\label{eqn:trace_identity} \mathrm{tr}(T_{F,S}) = \sum_k \lambda_k(F,S) 
 = (2 \pi)^{-d} | F | |S| .
\end{equation}
Here and in what follows, we write $|X|$ to denote the Lebesgue measure of a Borel set $X \subset \R^d$.

Several works have considered the concentration properties of the eigenvalues $\lambda_k(F,S)$ in the interval $[0,1]$ which refine the identity \eqref{eqn:trace_identity}. For this discussion, we fix a threshold $\epsilon \in (0,1/2)$. Provided that the size of space-frequency domain $|F||S|$ is sufficiently large depending on $\epsilon$, one expects (roughly speaking) that there will be approximately $(2 \pi)^{-d} |F| |S|$ many eigenvalues in $[1-\epsilon,1]$, relatively fewer eigenvalues in $(\epsilon, 1 - \epsilon)$, and the remaining eigenvalues in $(0,\epsilon]$ converge rapidly to zero. That is, one predicts that the eigenvalues $\lambda_k(F,S)$ will tend to accumulate near $0$ or near $1$, with relatively few eigenvalues of intermediate size. Accordingly, $T_{F,S}$ should be well-approximated by a finite-rank projection operator up to an error of smaller rank.
Following \cite{Landau75}, we make this prediction precise through an asymptotic rescaling of the frequency domain: We consider the distribution properties of the eigenvalues $\lambda_k(F,rS)$ of the operator $T_{F,rS} = B_{rS}P_FB_{rS}$, as $r \rightarrow \infty$, where $rS := \{r x : x \in S\}$. For $\epsilon \in (0,1)$, let $N_\epsilon(r)$ be the number of eigenvalues $\lambda_k(F,rS)$ that are larger than $\epsilon$. If $F,S \subset \R^d$ are measurable, then Landau \cite{Landau75} proves that
\begin{equation}
\label{landau-eig-clustering_eqn}
\lim_{r \rightarrow \infty} r^{-d} N_\epsilon(r)  = (2 \pi)^{-d} |F| |S|.
\end{equation}
{\it A key observation} is that the method of proof of \eqref{landau-eig-clustering_eqn} in \cite{Landau75} does not yield a quantitative rate of convergence as $r\to \infty$, and this motivates the results in our paper.

In the following, we write $\cH_{d-1}(E)$ for the $(d-1)$-dimensional Hausdorff measure of a Borel set $E \subset \R^d$.

\begin{theorem}\label{thm1}
Let $d \geq 2$ and let $F,S \subset \R^d$ be bounded measurable sets having maximally Ahlfors regular boundaries with regularity constants $\kappa_{\partial F}, \kappa_{\partial S}  > 0$, respectively (see \eqref{Ahlfors_cond} for the definition of Ahlfors regularity).  Consider the spatio-spectral limiting operator $T = T_{F,S} =  B_{S} P_{F} B_{S}$, and let $\lambda_1(T) \geq \lambda_2(T) \geq \cdots > 0$ be its sequence of nonzero eigenvalues. Let $M_\epsilon(T) := \# \{ n : \lambda_n(T) \in (\epsilon,1-\epsilon)\}$ for $\epsilon > 0$.

If $\cH_{d-1}(\partial F) \cdot \cH_{d-1}(\partial S) \geq 64^{d-1}$ then for any $\epsilon \in (0,1/2)$,
\begin{equation}\label{eqn:main1}
\begin{aligned}
M_\epsilon(T) \lesssim  \frac{\cH_{d-1}(\partial F)}{\kappa_{\partial F}} \frac{\cH_{d-1}(\partial S)}{\kappa_{\partial S}}  \biggl\{ & \log\left(\cH_{d-1}(\partial F) \cdot \cH_{d-1}(\partial S)\right)  \log(\epsilon^{-1})^d \\
& + \log \left(\cH_{d-1}(\partial F) \cdot \cH_{d-1}(\partial S)\right)^3 \log(\epsilon^{-1}) \biggr\}.
\end{aligned}
\end{equation}
\end{theorem}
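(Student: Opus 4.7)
The plan is to reduce bounding $M_\epsilon(T_{F,S})$ to eigenvalue estimates for cubical SSLOs $T_{Q,R}$, for which \cite{ArieAzita23} provides $M_\epsilon(T_{Q,R}) \lesssim \log(\epsilon^{-1})^d$ up to boundary factors. The reduction uses a two-stage Whitney-type dyadic decomposition: first partition $F$ into dyadic cubes, with interior cubes of some maximal side length $L$ and boundary cubes whose sizes shrink geometrically toward $\partial F$; then, for each spatial cube produced, perform the analogous dyadic decomposition of $S$. The Ahlfors regularity assumption bounds the number of cubes of side $\ell$ meeting $\partial F$ by $\lesssim \cH_{d-1}(\partial F)/(\kappa_{\partial F}\, \ell^{d-1})$, and similarly for $\partial S$, controlling the combinatorial cost of the decomposition.

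The main analytic tool is the Ky Fan / Weyl counting inequality: for nonnegative self-adjoint operators $A, B$, one has $M_{a+b}(A+B) \leq M_a(A) + M_b(B)$, which iterates to finite sums. Since $P_F = \sum_i P_{Q_i}$ and $B_S = \sum_j B_{R_j}$ are both \emph{exact} decompositions (multiplication and Fourier-multiplier projections onto disjoint sets add), these partitions propagate into $T_{F,S}$. To count eigenvalues in $(\epsilon, 1-\epsilon)$, I would pass through $T(I-T)$, using that $\lambda(1-\lambda) \geq \epsilon/2$ on $(\epsilon, 1-\epsilon)$ for $\epsilon < 1/2$; this reduces the problem to counting large eigenvalues of the single nonnegative operator $T(I-T)$, whose cubical analogue is also controlled via \cite{ArieAzita23}.

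In Stage 2, for each spatial cube $Q_i$ from Stage 1, the expansion $B_S = \sum_j B_{R_j}$ yields $T_{Q_i,S} = \sum_j T_{Q_i, R_j} + \sum_{j\neq j'} B_{R_j} P_{Q_i} B_{R_{j'}}$. The diagonal sum is a sum of cubical SSLOs, each handled by \cite{ArieAzita23}. Combining this with Weyl additivity, and then summing over $i, j$ while using Ahlfors regularity to count the boundary cubes at each dyadic scale, yields the leading term $\log(\cH_{d-1}(\partial F) \cdot \cH_{d-1}(\partial S))\,\log(\epsilon^{-1})^d$ in \eqref{eqn:main1}.

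The main obstacle is the off-diagonal cross terms $B_{R_j} P_{Q_i} B_{R_{j'}}$ for $j\neq j'$, which are not self-adjoint and preclude direct Weyl additivity. The key technical step is to show that these blocks have essentially finite rank with norm-controlled remainder, by using rapid spatial decay of $\widehat{\chi_{R_j}}$ away from $R_j$ to bound their singular values when $R_j$ and $R_{j'}$ are dyadically well-separated in frequency. Accumulating these rank corrections over $\sim \log(\cH_{d-1}(\partial F) \cdot \cH_{d-1}(\partial S))$ dyadic scales is expected to produce the secondary term $\log(\cH_{d-1}(\partial F) \cdot \cH_{d-1}(\partial S))^3 \log(\epsilon^{-1})$ in \eqref{eqn:main1}; balancing the choice of $L$ and the depth of dyadic refinement against $\epsilon$ so as to equidistribute these errors is the delicate point of the argument.
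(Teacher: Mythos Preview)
Your high-level plan (two-stage dyadic decomposition, Ahlfors regularity to count boundary cubes, passage through $T-T^2$, cubical input from \cite{ArieAzita23}) matches the paper. The genuine gap is your handling of the frequency-side decomposition.

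Your Stage~2 expansion $T_{Q_i,S}=\sum_j T_{Q_i,R_j}+\sum_{j\neq j'}B_{R_j}P_{Q_i}B_{R_{j'}}$ produces off-diagonal blocks, and you propose to dispose of them using ``rapid spatial decay of $\widehat{\chi_{R_j}}$.'' But $\widehat{\chi_{R_j}}$ is a tensor product of sinc functions and decays only like $|x_k|^{-1}$ in each coordinate; there is no rapid decay to exploit, and these cross terms are in fact comparable in Schatten norm to the diagonal ones. This route does not close, and the appearance of the $\log^3$ term in \eqref{eqn:main1} has nothing to do with accumulating cross-term rank corrections.

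The paper never faces cross terms, for two reasons. First, it works through the Hankel operator $H_{F,S}=(1-B_S)P_FB_S$ and the identity $T_{F,S}-T_{F,S}^2=H_{F,S}^*H_{F,S}$. Since $H_{F,S}$ is \emph{linear} in $P_F$, a partition $F=\bigcup_Q Q$ gives $H_{F,S}=\sum_Q H_{Q,S}$ exactly, and then subadditivity of Schatten $(2p)$-norms for $2p\le 1$ yields $\|T_{F,S}-T_{F,S}^2\|_p^p\le\sum_Q\|T_{Q,S}-T_{Q,S}^2\|_p^p$ (Lemma~\ref{decomp:lem}). No Weyl/Ky~Fan inequality is used, and nothing quadratic in the decomposition appears. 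Second, for the frequency domain the paper does \emph{not} expand $B_S$ inside $T_{Q,S}$. Instead it invokes the spectral symmetry $\sigma(T_{F,S})=\sigma(T_{S,F})$ (Proposition~\ref{prop:SSLO_props}), so that $S$ becomes the \emph{spatial} variable in $T_{S,F}$; one then decomposes $S$ into cubes $Q_S$ exactly as before, and the required input $M_\epsilon(T_{Q_S,F})=M_\epsilon(T_{F,Q_S})$ is precisely the Stage~1 result (Theorem~\ref{stageIdecomp:thm}). The $\log^3$ term arises simply from summing $\sum_{k\le k_{\max}} k^2$ over dyadic scales in this second pass, not from any cross-term bookkeeping. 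Finally, because the dyadic cubes need not tile $F$ or $S$ exactly, the paper uses inner/outer approximations $F_-\subset F\subset F_+$ and the operator monotonicity $T_{F_-}\le T_F\le T_{F_+}$, together with Lemma~\ref{lem2}, to pass from the approximations back to $F$; your proposal does not address this step.
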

\noindent Here and throughout, $X \lesssim Y$ indicates that $X \leq CY$ for a constant $C$ depending only on the dimension $d$.

Our proof of Theorem  \ref{thm1} relies on the decomposition techniques developed in \cite{MaRoSp23}. Specifically, we determine a suitable decomposition of an SSLO $T_{F,S}$ into components $T_{Q_F,Q_S}$ where $Q_F$ and $Q_S$ are cubes contained in (or located near the boundary of) the domains $F$ and $S$, respectively. The decomposition process will be performed in two steps, first, with respect to the spatial domain $F$, and second, with respect to the frequency domain $S$. The Ahlfors condition on the boundaries of the domains is used to estimate the number of component cubes $Q_F$ or $Q_S$ at a fixed scale; see Proposition \ref{whitney_truncated:prop}. Estimates on the component operators $T_{Q_F,Q_S}$ are given in Lemma \ref{lem3}. We shall explain how to pass from estimates on the component operators $T_{Q_F,Q_S}$ to  estimates on the operator $T_{F,S}$. In Lemma \ref{decomp:lem} we present the case of a finite decomposition of the spatial domain $F$, with $S$ fixed. 

By adapting the techniques in \cite{MaRoSp23}, we pass from estimate \eqref{eqn:main1} on $M_\epsilon(T)$ to an estimate on the full distribution function $N_\epsilon(T) := \# \{ n  : \lambda_n(T) > \epsilon \}$ as follows.

\begin{theorem}
\label{thm2} 
Let $T= T_{F,S}$ be a spatio-spectral limiting operator as in the setting of Theorem \ref{thm1}. Then for any $\epsilon \in (0,1)$,
\begin{equation}\label{eqn:main2}
\begin{aligned}
|N_\epsilon(T) - (2 \pi)^{-d} |F| | S|| \lesssim \frac{\cH_{d-1}(\partial F)}{\kappa_{\partial F}} \frac{\cH_{d-1}(\partial S)}{\kappa_{\partial S}} &  \biggl\{  \log\left(\cH_{d-1}(\partial F)  \cH_{d-1}(\partial S)\right)  \log(\min\{\epsilon,1-\epsilon\}^{-1})^d \\
& + \log \left(\cH_{d-1}(\partial F)  \cH_{d-1}(\partial S)\right)^3 \log(\min\{\epsilon,1-\epsilon\}^{-1}) \biggr\}.
\end{aligned}
\end{equation}
\end{theorem}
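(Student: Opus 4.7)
The plan is to reduce the distributional estimate on $N_\epsilon(T)$ to the intermediate-range bound on $M_\delta(T)$ from Theorem \ref{thm1}, via the trace identity \eqref{eqn:trace_identity} and a layer-cake rewriting. Starting from
\[
N_\epsilon(T) - \tr(T) \;=\; \sum_{\lambda_k > \epsilon}(1-\lambda_k) \;-\; \sum_{\lambda_k \leq \epsilon} \lambda_k,
\]
I would express each sum through the distribution function $N(t) := \#\{k : \lambda_k(T) > t\}$ as
\[
\sum_{\lambda_k \leq \epsilon} \lambda_k = \int_0^\epsilon [N(t) - N(\epsilon)]\,dt, \qquad \sum_{\lambda_k > \epsilon}(1-\lambda_k) = \int_\epsilon^{1} [N(\epsilon) - N(u)]\,du.
\]

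The key step is to dominate each integrand by a suitable $M_t(T)$. Assume first that $\epsilon \in (0,1/2)$. For $t \in (0,\epsilon)$, the difference $N(t)-N(\epsilon)$ counts eigenvalues in $(t,\epsilon] \subset (t, 1-t)$ and is therefore bounded by $M_t(T)$. For $u \in (\epsilon, 1-\epsilon]$, $N(\epsilon)-N(u)$ counts eigenvalues in $(\epsilon,u] \subset (\epsilon, 1-\epsilon)$, bounded by $M_\epsilon(T)$. For $u \in (1-\epsilon, 1)$, the substitution $v = 1-u \in (0,\epsilon)$ converts the count into one of eigenvalues in $[1-\epsilon, 1-v] \subset (v, 1-v)$, bounded by $M_v(T)$. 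Adding the contributions gives
\[
|N_\epsilon(T) - \tr(T)| \;\lesssim\; M_\epsilon(T) + 2\int_0^\epsilon M_t(T)\, dt \qquad \text{for } \epsilon \in (0, 1/2),
\]
and an identical argument with the roles of $0$ and $1$ exchanged produces the same bound with $\epsilon$ replaced by $1-\epsilon$ when $\epsilon \in (1/2, 1)$, accounting for the $\min\{\epsilon, 1-\epsilon\}$ appearing in \eqref{eqn:main2}.

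It then remains to substitute the bound of Theorem \ref{thm1} and compute the integral. The change of variable $s = \log(1/t)$ reduces $\int_0^{\epsilon'} \log(t^{-1})^d\, dt$ to an upper incomplete Gamma integral, yielding $\int_0^{\epsilon'} \log(t^{-1})^d\, dt \lesssim \epsilon'\, \log((\epsilon')^{-1})^d$ for $\epsilon' \in (0, 1/2)$, so $\int_0^{\epsilon'} M_t(T)\,dt$ has the same asymptotic form as $M_{\epsilon'}(T)$ itself and \eqref{eqn:main2} follows. The main subtlety lies in treating the eigenvalues near $1$ inside $\int_{1-\epsilon}^1 [N(\epsilon) - N(u)]\,du$: naively bounding $N(\epsilon) - N(u) \leq N(\epsilon)$ would incur the full mass $(2\pi)^{-d}|F||S|$ of the eigenvalue cluster near $1$ and destroy the estimate. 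Exploiting the built-in two-sided symmetry of $M_\delta$ — its restriction to the intermediate band $(\delta, 1-\delta)$ — via the reflection $v = 1-u$ is what allows the count near $1$ to be absorbed into counts near $0$, for which Theorem \ref{thm1} provides the desired logarithmic control.
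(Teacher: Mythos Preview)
Your argument is correct. One small slip: in the third range $u\in(1-\epsilon,1)$ the count $N(\epsilon)-N(u)$ is over eigenvalues in $(\epsilon,1-v]$, not $[1-\epsilon,1-v]$ as you wrote; since $v<\epsilon$ this interval is still contained in $(v,1-v)$ for a.e.\ $v$, so the bound by $M_v(T)$ survives and the rest of the computation goes through unchanged.

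Your route is genuinely different from the paper's. The paper does not integrate $M_t(T)$ at all: it first combines Theorem~\ref{thm1} with Lemma~\ref{lem1} at $p=1$ to bound $\tr(T-T^2)$, then invokes Lemma~\ref{lem2} (the crossing-index lemma) to pin down an interval of indices $[H_2,H_1]$ around $(2\pi)^{-d}|F||S|$ outside of which the eigenvalues are on a definite side of $1/2$. A set-theoretic sandwich then gives $H_2-1-M_\tau(T)\le N_\epsilon(T)\le H_1+1+M_\tau(T)$ for any $\tau<\epsilon_0$, and one lets $\tau\to\epsilon_0$. Thus the paper evaluates $M_\tau$ at a single threshold and offloads the ``location of the cluster near $1$'' to Lemma~\ref{lem2}. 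Your approach bypasses Lemma~\ref{lem2} entirely, trading it for the layer-cake identity and an incomplete-Gamma estimate on $\int_0^{\epsilon_0}\log(t^{-1})^d\,dt$; it is more self-contained but uses Theorem~\ref{thm1} over the whole range $t\in(0,\epsilon_0)$ rather than at one point. Both yield the same final bound.
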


See Section \ref{sec:pfrem1} for the proof of Theorem \ref{thm2}. We remark that the constant $(2 \pi)^{-d}$ appearing in the bound \eqref{eqn:main2} is dependent on the normalization of the Fourier transform, which we define by $\widehat{f}(\xi) = \int_{\R^d} f(x) e^{- i x \cdot \xi} dx$. Here, we have adopted the same normalization as \cite{Landau67, Landau75} -- see also \eqref{eqn:trace_identity} and \eqref{landau-eig-clustering_eqn}.

\begin{remark}\label{UP1}
There is a rich literature addressing the quantitative variants of the uncertainty principle. In these, one looks to quantify the extent to which an $S$-bandlimited function can be concentrated on a given spatial domain $F$ (or vice versa). Various refinements of the uncertainty principle have been addressed in the literature and are connected to classical topics in mathematics. For example, the Donoho-Stark uncertainty principle \cite{donoho1989uncertainty} and Tao's uncertainty principle \cite{tao2005} are fundamental results with applications in signal recovery and compressive sensing (for applications, see e.g.,  \cite{candestao2006,iosevich2023uncertainty}). Dyatlov's fractal uncertainty principle \cite{dyatlov2019introduction}  finds applications in ergodic theory and quantum chaos, while Moitra's uncertainty principle \cite{moitra2015super} is related to the problem of super-resolution in signal processing.

Several important uncertainty principles can be interpreted as giving bounds on $\lambda_1(F,S)$, or on the higher eigenvalues of $T_{F,S}$. For example, the fractal uncertainty principle \cite{dyatlov2019introduction} asserts that $\lambda_1(F,S) \leq C h^\beta$ for some $\beta > 0$, when $F$ and $S$ satisfy an Ahlfors regularity condition at scale above $h \in (0,1)$ (including, importantly, the case where $F$ and $S$ are neighborhoods of fractal sets). The continuous-time version of the Donoho--Stark  uncertainty principle asserts that $\lambda_1(F,S) \leq c_d |F| |S|$, where $c_d$ is an  explicit constant depending on $d$ and determined by the normalization of the Fourier transform. ({In fact, the continuous-time version of the Donoho-Stark uncertainty principle is immediate from the trace identity \eqref{eqn:trace_identity}, since, for the normalization of the Fourier transform adopted here, $\lambda_1(F,S) \leq \tr(T_{F,S}) = (2\pi)^{-d} |F| |S|$.}) 
Moitra's uncertainty principle \cite{moitra2015super}  gives bounds on the largest and smallest eigenvalues of a finite-dimensional analogue of the operator $T_{F,S}$ acting on $\ell^2(\Z/N\Z)$, where the $F$ and $S$ correspond to subsets of $\Z/N \Z$ satisfying a separation condition. In fact, Moitra's uncertainty principle relates to the problem of determining the condition number of submatrices of Vandermonde matrices (see also \cite{Pan2016}).
\end{remark}

\subsection{Comparison to previous results}

In \cite{Karnik21}, a near-sharp upper bound is given for the eigenvalue counting function for one-dimensional spatio-spectral limiting operators associated to a pair of intervals: Let $W\geq 2\pi $, and let $\lambda_k(W)$ be the sequence of eigenvalues for the SSLO  $T_W = T_{[0,1],[-W,W]} = B_{[-W,W]}P_{[0,1]}B_{[-W,W]}$ on $L^2(\R)$. In  \cite{Karnik21}, the authors prove an upper bound   for the counting function $M_\epsilon(W):=\# \{ k : \lambda_k(W) \in (\epsilon, 1 - \epsilon) \}$ by 
\begin{align}\label{Romberg-Thm:3} 
M_\epsilon(W)\leq \frac{2}{\pi^2} \log \left( \frac{50W}{\pi}+25 \right) \log \left( \frac{5}{\epsilon(1-\epsilon)} \right) + 7.
 \end{align}
The dependence on $W$ in \eqref{Romberg-Thm:3} is asymptotically sharp for $\epsilon$ fixed and small, since Landau--Widom \cite{LandauWidom80} proved that $M_\epsilon(W) = \frac{2}{\pi^2} \log(W) \log\left( \frac{1}{\epsilon}-1 \right) + o(\log(W))$ as $W \rightarrow \infty$.

As noted in \cite{Karnik21}, the equation \eqref{Romberg-Thm:3} implies the exponential decay of $\lambda_k(W)$ for $k \geq \lceil W/\pi \rceil$: 
\begin{equation}\label{1d_eigdecay}
\lambda_{k}(W) \leq 10 \exp \left[ - \frac{k - \lceil W/\pi \rceil - 6}{c_1 \log(W + c_2)}  \right] \qquad k \geq \lceil W/\pi \rceil,
\end{equation}
where the constants $c_1, c_2 > 0$ are explicit, and $\lceil x \rceil$ is the ceiling function of $x$.

We remark that the eigenfunctions of the one-dimensional SSLOs associated to a pair of intervals are the well-known Prolate Spheroidal Wave Functions arising in mathematical physics \cite{Bell1, Bell2, Bell3}. We refer the reader to \cite{ Osipov13, Israel15, Karnik21, Bonami21} 
for additional results on the distribution of eigenvalues of SSLOs in the one-dimensional case. The discrete version of the problem has also been considered  \cite{Bell5, Karnik21}.

In higher dimensions, when the spatial and frequency domains are balls in $\R^d$, the eigenfunctions and eigenvalues of SSLOs were examined in \cite{Bell4, Shkolnisky07}. In this case, the eigenfunctions of SSLOs are also the eigenfunctions of a related differential operator. These functions form an orthogonal system in $L^2(\R^d)$, known as \emph{prolate functions}, and have been used for applications in scientific imaging problems, e.g., in cryoelectron microscopy (cryo-EM) \cite{Lederman17, Shkolnisky17A, Shkolnisky17B, LedermanSinger17, LedermanSinger20} and MRI   \cite{Yang02}. The papers \cite{Lederman17,GreengardSerkh18} develop numerical techniques for computing prolate functions and the corresponding eigenvalues, and describe applications in numerical analysis for the processing of bandlimited functions (e.g., methods for quadrature and interpolation).

To the best of our knowledge, the only results on quantitative bounds for the eigenvalues of SSLOs in the higher-dimensional case were established in \cite{ArieAzita23} and \cite{MaRoSp23}, which appeared as arXiv preprints almost simultaneously in 2023.
For example, in \cite{ArieAzita23}, the second and third-named authors proved the following theorem: 
\begin{theorem}\label{mainthm:cube_convex}
Let $F = [0,1]^d$ be the standard unit hypercube. Let $S \subset B(0,1)$ be a compact convex set in $\R^d$ that is closed under coordinate reflections; that is, if $ (x_1,\cdots,x_d) \in S$ then $(x_1,\cdots, - x_j, \cdots, x_d) \in S$ for any $j$. For $r > 0$, let $rS$ denote the $r$-dilate of $S$. Then 
for any $\epsilon \in (0,1/2)$ and $r \geq 1$, 
\begin{align}\label{eig_clust:eqn1}
&| \# \{ k : \lambda_k(F,rS) > \epsilon\} - (2\pi)^{-d} | rS | | \leq C_d E_d(\epsilon,r) ,\\
\label{eig_clust:eqn2}
& \# \{ k : \lambda_k(F,rS) \in (\epsilon, 1-\epsilon)\} \leq C_d E_d(\epsilon,r), \\
\label{eig_clust:eqn3}
&E_d(\epsilon,r) := \max \{ r^{d-1} \log(r/\epsilon)^{5/2}, \log(r/\epsilon)^{5d/2} \}.
\end{align}
Here,  $C_d > 0$  is  a dimensional constant and is independent of $S$. 
\end{theorem}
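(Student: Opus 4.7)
The plan is to exploit the tensor product structure available when $S$ is an axis-aligned box centered at the origin, and then reduce the general convex symmetric case to a superposition of box cases. When $S=[-a_1,a_1]\times\cdots\times[-a_d,a_d]$ is a product, both $F=[0,1]^d$ and $rS$ are products, so the SSLO factors as $T_{F,rS}=\bigotimes_{i=1}^d T_{[0,1],[-ra_i,ra_i]}$, and its eigenvalues are products $\prod_{i=1}^d \lambda_{k_i}^{(i)}$ of one-dimensional SSLO eigenvalues. The 1D plunge bound \eqref{Romberg-Thm:3} together with the Landau--Widom asymptotic yield sharp counts for each factor: about $ra_i/\pi$ eigenvalues exceed $1/2$, and only $\lesssim \log(ra_i+1)\log(\epsilon^{-1})$ lie in $(\epsilon,1-\epsilon)$.

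Next, I would count how many tuples $(k_1,\ldots,k_d)$ give a product $\prod \lambda_{k_i}^{(i)}$ in $(\epsilon,1-\epsilon)$, or exceeding $\epsilon$. To do this, partition each 1D index set into a \emph{top} class where $\lambda_{k}^{(i)}>1-\delta$, a \emph{plunge} class where $\lambda_{k}^{(i)}\in[\delta,1-\delta]$, and a \emph{tail} class where $\lambda_{k}^{(i)}<\delta$, using a geometrically decreasing sequence of thresholds $\delta$. A tuple can contribute to the plunge region only if it contains at least one plunge coordinate and no tail coordinate; the dominant contributions come from tuples with one plunge coordinate and the rest in the top class, giving the main $r^{d-1}\log(r/\epsilon)^{5/2}$ term, while entirely plunged tuples contribute the residual $\log(r/\epsilon)^{5d/2}$ term. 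The Weyl-type estimate $|\#\{\lambda_k > \epsilon\} - (2\pi)^{-d}|rS||$ then follows by combining this plunge bound with the Landau--Widom error in each 1D factor.

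For a general convex symmetric $S$, I would pass from the box case via a Whitney-type decomposition. Since $S\subset B(0,1)$ is convex, one has $\cH_{d-1}(\partial(rS))\lesssim r^{d-1}$, and the coordinate-reflection symmetry permits a nested sandwich $rS_{\mathrm{in}}\subset rS\subset rS_{\mathrm{out}}$ by unions of reflection-symmetric axis-aligned rectangles, with $O(r^{d-1})$ boundary boxes of unit Fourier-scale and a controlled number of larger interior boxes. Apply the product-case counting bound on each box and combine via operator monotonicity, using a trace-class perturbation estimate of the form $\sum_k |\lambda_k(T_{F,rS})-\lambda_k(T_{F,rS_{\mathrm{in}}})|\leq \|T_{F,rS}-T_{F,rS_{\mathrm{in}}}\|_1 \lesssim (2\pi)^{-d}|rS\setminus rS_{\mathrm{in}}|$ to absorb the difference between $S$ and its inscribed/circumscribed approximants.

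The main obstacle will be the combinatorial product count in the second paragraph: the threshold $\delta$ must be chosen carefully and the exponents of $\log(r/\epsilon)$ tracked precisely so that the dominant $r^{d-1}$ boundary term carries only a $\log^{5/2}$ factor, rather than the $\log^{d}$ one would get from a naive union bound across the $d$ coordinate directions. A secondary difficulty is engineering the Whitney decomposition so that reflection symmetry is preserved and the number of boundary boxes remains $O(r^{d-1})$ without incurring additional logarithmic losses.
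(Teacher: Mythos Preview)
The paper does not prove Theorem~\ref{mainthm:cube_convex}; it is quoted verbatim from \cite{ArieAzita23} as background and motivation, so there is no proof in this paper against which to compare your proposal.

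That said, your outline is broadly aligned with what the paper indicates about the methods of \cite{ArieAzita23}: the tensor-product factorisation $T_{[0,1]^d,Q}=\bigotimes_i T_{[0,1],I_i}$ for product frequency domains, combined with the one-dimensional bounds of \cite{Karnik21}, is exactly the mechanism singled out in the discussion preceding Lemma~\ref{lem3}. So the first two paragraphs of your plan are on the right track in spirit.

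There are, however, genuine gaps. First, invoking the Landau--Widom \emph{asymptotic} is not enough: you need the non-asymptotic bound \eqref{Romberg-Thm:3} and the explicit decay \eqref{1d_eigdecay} throughout, since the statement is quantitative for all $r\geq 1$. Second, the exponent $5/2$ is the crux of the combinatorics, and your sketch does not explain how it arises; a naive top/plunge/tail partition with a single threshold $\delta$ gives $\log(r)\log(\epsilon^{-1})$ per plunge coordinate and $r$ per top coordinate, which produces $r^{d-1}\log(r)\log(\epsilon^{-1})$ rather than $r^{d-1}\log(r/\epsilon)^{5/2}$. Getting the precise mixed exponent requires a more careful dyadic layering of thresholds and a non-obvious summation, which you have flagged as the obstacle but not resolved. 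Third, your perturbation step is too weak as written: the inequality $\sum_k|\lambda_k(T_1)-\lambda_k(T_2)|\leq\|T_1-T_2\|_1$ controls an $\ell^1$ deviation of eigenvalues but does not by itself bound $|M_\epsilon(T_1)-M_\epsilon(T_2)|$; one needs a two-sided sandwich $T_{F,rS_-}\leq T_{F,rS}\leq T_{F,rS_+}$ together with a crossing-index estimate of the type in Lemma~\ref{lem2}, exactly as in Sections~\ref{sec:dd1}--\ref{sec:dd2} of the present paper.
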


Bounds \eqref{eig_clust:eqn1} and \eqref{eig_clust:eqn2} describe the clustering behavior of the SSLO eigenvalues near $0$ and $1$. Observe that $(2\pi)^{-d} |r S| = (2\pi)^{-d} |S| r^d$, while the term on the right-hand side of \eqref{eig_clust:eqn1} and \eqref{eig_clust:eqn2} is of lower order $\sim e(r) := r^{d-1} \log(r)^{5/2}$ for fixed $\epsilon>0$ and large $r$. Thus, Theorem \ref{mainthm:cube_convex} implies that the first $(2\pi)^{-d} |S| r^d - O(e(r))$ many eigenvalues will be very close to $1$, followed by at most $O(e(r))$ many eigenvalues of intermediate size, and the remaining eigenvalues decay to $0$ at a near-exponential rate (see Corollary 1.5 in \cite{ArieAzita23}
 for a  statement about the decay rate). In particular, \eqref{eig_clust:eqn1} gives a quantitative version of Landau's formula \eqref{landau-eig-clustering_eqn}.

In  \cite{MaRoSp23}, the authors studied the distribution of the eigenvalues $\lambda_k(F,S)$ for a wider class of domains $F$ and $S$ in $\R^d$, such that the boundaries $\partial F$ and $\partial S$ are maximally Ahlfors regular with regularity constants $\kappa_{\partial F}$ and $\kappa_{\partial S}$, and their Hausdorff $(d-1)$-measures satisfy $\cH_{d-1}(\partial F) \cdot \cH_{d-1}(\partial S) \geq 1$. Under this assumption, the authors prove an estimate of the following form: For every $\alpha \in (0,1/2]$ there exists $A_{\alpha, d} \geq 1$ such that for all $\epsilon \in (0,1/2)$,
\begin{equation}\label{Romero_bd}
\# \{k \in \N : \lambda_k(F,S) \in (\epsilon,1-\epsilon)\} \leq A_{\alpha,d} \frac{\cH_{d-1}(\partial F)}{\kappa_{\partial F}} \frac{\cH_{d-1}(\partial S)}{\kappa_{\partial S}} \log \left( \frac{\cH_{d-1}(\partial F) \cH_{d-1}(\partial S)}{\kappa_{\partial F} \; \epsilon}\right)^{2d(1+\alpha) + 1}.
\end{equation}

In this paper, we appeal to the domain decomposition techniques of \cite{MaRoSp23} and strictly sharpen the bound \eqref{Romero_bd} for the same class of domains. Indeed, comparing \eqref{Romero_bd} with our bound \eqref{eqn:main1}, we see that the exponent of the $\log(\epsilon^{-1})$ term is reduced from $2d(1+\alpha) + 1$ to $d$, and the exponent of the $\log(\cH_{d-1}(\partial F) \cH_{d-1}(\partial S))$ term is reduced from $2d(1+\alpha) + 1$ to $3$. 

When $F = [0,1]^d$ and $S$ is a domain with maximally Ahlfors regular boundary, the estimate \eqref{eqn:main1} implies that
\[
\#\{k : \lambda_k(F,rS) \in (\epsilon,1-\epsilon)\} \leq C r^{d-1} \left\{ \log(r) \log(\epsilon^{-1})^d + \log(r)^3 \log(\epsilon^{-1})\right\},
\]
where $C$ is a constant determined by $S$ and $r$ is sufficiently large. In fact, a single factor of $\log(r)$ can be removed from the above right-hand side by appealing to Theorem \ref{stageIdecomp:thm}. However, if $F=[0,1]^d$ and $S$ is convex and symmetric with respect to coordinate reflections, the bound \eqref{eig_clust:eqn2} is strictly stronger in the regime when $d \geq 3$, $\epsilon$ is proportional to a negative power of $r$, and $r \rightarrow \infty$. Therefore, the results of \cite{ArieAzita23} yield an improvement over \eqref{eqn:main1} and \eqref{Romero_bd} for a narrower class of domains and a particular range of parameters.

\subsection{Organization of the paper}  The rest of the article is organized as follows. After stating the preliminaries and establishing notation in Section \ref{Prelim}, we develop techniques for the dyadic decomposition of Ahlfors regular domains in Section \ref{dyadic approximation of domains}, a key aspect of the study. In Section \ref{framework}, we discuss the class of $p$-Schatten operators, outlining their significance in the context of the paper. We also describe the basic properties of spatio-spectral limiting operators.  We conclude this section with a key result on the behaviour of the eigenvalues for a sum of spatio-spectral limiting operators. In
 Section  \ref{sec:dd1}, we apply the decomposition technique to the spatial domain. We conclude the proof of Theorem \ref{thm1} in Section  \ref{sec:dd2} by performing a decomposition of the frequency domain. Section \ref{sec:pfrem1} is dedicated to giving a proof of Theorem \ref{thm2}.


\section{Notation}\label{Prelim}

For ease of future reference, we collect here some of the notation that will be used throughout.

\begin{itemize}
\item Unless stated otherwise, we will assume that \(\epsilon \in (0,1/2)\). 

\item Write $\log(\cdot)$ for the base $2$ logarithm and $\ln(\cdot)$ for the natural logarithm. Write $d$ for the ambient dimension of the Euclidean space, e.g., $\R^d$.  The Euclidean norm on $\R^d$ is $\| x \| := \left(\sum_i x_i^2 \right)^{1/2}$ for $x \in \R^d$. Let $B(x,r) = \{ z \in \R^d : \| z-x \| \leq r\}$ be the closed ball with center $x\in\R^d$ and radius $r$. 

For a measurable subset $E \subset \R^d$, let $|E|$ be the Lebesgue measure of $E$, and let  $\cH_{d-1}(E)$ be the $(d-1)$-dimensional Hausdorff measure of $E$.

\item For $t > 0$ and $\Omega \subset \R^d$, let $t \Omega := \{ tx : x \in \Omega\}$, where $tx= (tx_1, \cdots, tx_d)$.

\item A \emph{cube} is a set of the form $Q = c + \left[-\frac{\delta}{2}, \frac{\delta}{2}\right]^d$, with \emph{center} $c_Q := c \in \R^d$ and \emph{sidelength} $\delta_Q := \delta$.  

\item Write $\chi_A$ for the indicator function of a set $A \subset \R^n$. Thus, $\chi_A(x) = 1$ if $x \in A$, and $\chi_A(x) = 0$ if $x \in \R^n \setminus A$. 

\item Write $\widehat{g}$ to denote the Fourier transform of $g \in L^2(\R^d)$, defined by $\widehat{g}(\xi) = \int_{\R^d} g(x) e^{-ix \cdot \xi} dx$.
The inverse Fourier transform $\mathcal{F}^{-1} $ on $L^2(\R^d)$ is then defined by  $\mathcal{F}^{-1}[f](x) = \frac{1}{(2 \pi)^d} \int_{\R^d} f(\xi) e^{i x \cdot \xi} d \xi$.

\item For $X,Y \geq 0$ we use the notation $X \lesssim Y$ to indicate that there exists a constant $C$, depending only on the dimension $d$ -- but not on $X,Y$ -- such that $X \leq C Y$.  We write $X \sim Y$ to denote that $X \lesssim Y$ and $Y \lesssim X$.
\end{itemize}

\section{Dyadic approximations of Ahlfors regular domains}\label{dyadic approximation of domains}

Following \cite{MaRoSp23}, we say that a set $\Omega \subset \R^d$ ($d \geq 2$) has \emph{maximally Ahlfors regular} boundary with \emph{regularity constant} $\kappa_{\partial \Omega}>0$ provided that
\begin{equation}
    \label{Ahlfors_cond}
    \cH_{d-1}(\partial \Omega \cap B(x,r)) \geq \kappa_{\partial \Omega} r^{d-1} \qquad \forall \ x \in \partial \Omega, \; 0 < r \leq \cH_{d-1}(\partial \Omega)^{1/(d-1)}.
\end{equation} 
Here, the term `maximal' pertains to the range of $r$ for which the estimate in \eqref{Ahlfors_cond} is valid.  We note that $\kappa_{\partial \Omega}\in (0,1]$, as follows by taking $r= \cH_{d-1}(\partial \Omega)^{1/(d-1)}$ in \eqref{Ahlfors_cond}.

For $t > 0$ and $\Omega \subset \R^d$, by the scaling properties of the Hausdorff measure we have
\begin{equation}\label{eqn:scaling1}
\cH_{d-1}(\partial t \Omega) = t^{d-1} \cH_{d-1}(\partial \Omega).
\end{equation}
Further, if $\Omega$ has maximally Ahlfors regular boundary with regularity constant $\kappa_{\partial \Omega}$, then $t \Omega$ has maximally Ahlfors regular boundary with regularity constant $\kappa_{\partial \Omega}$. That is,
\begin{equation}\label{eqn:scaling2}
\kappa_{\partial t \Omega} = \kappa_{\partial \Omega}.
\end{equation}
To prove \eqref{eqn:scaling2}, we simply apply \eqref{eqn:scaling1} in the definition \eqref{Ahlfors_cond}. 

We shall use these scaling properties in Sections \ref{sec:dd1} and \ref{sec:dd2} of the paper.

The following proposition elaborates on a cubical covering technique used in the proof of Lemma~5.2 of \cite{MaRoSp23}. We isolate the properties of the covering for future use in Sections \ref{sec:dd1} and \ref{sec:dd2}, and include the proof for completeness.
\begin{proposition}\label{whitney_truncated:prop}
Assume that $\Omega \subset \R^d$ is a bounded set having a maximally Ahlfors regular boundary with  regularity constant $\kappa_{\partial \Omega} > 0$. Take $0 < \eta \leq \frac{1}{2} \cH_{d-1}(\partial \Omega)^{1/(d-1)}$. There exists a finite family $\cW = \cW(\eta)$ of cubes with pairwise disjoint interiors and   sidelengths being integer powers of $2$, satisfying the following  properties: 
\begin{enumerate}
\item The set $\cW$ is a cover for  $\Omega$, i.e., $\Omega \subset \bigcup_{Q \in \cW} Q$.
\item If $Q \in \cW$ with $\delta_Q \geq 2 \eta$, then $Q \subset \Omega$.
\item If $Q \in \cW$, then $\eta \leq \delta_Q \leq \cH_{d-1}(\partial\Omega)^{1/(d-1)}$.
\item For every $\delta \geq \eta$, 
\[
\# \{ Q \in \cW : \delta_Q = \delta \} \leq 6^d \frac{\cH_{d-1}(\partial \Omega)}{\kappa_{\partial \Omega}} \delta^{1-d}.
\]
\end{enumerate}
\end{proposition}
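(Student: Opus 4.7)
The plan is to construct $\cW$ via a Whitney-type dyadic subdivision procedure and then verify each of the four conclusions in turn, following the approach of Lemma~5.2 of \cite{MaRoSp23}. Set $H := \cH_{d-1}(\partial \Omega)^{1/(d-1)}$, and choose integers $M, m$ with $2^M$ the largest power of $2$ satisfying $2^M \leq H$ and $2^m$ the smallest power of $2$ satisfying $2^m \geq \eta$; the hypothesis $\eta \leq H/2$ gives $2^m \leq 2\eta \leq H$, so $m \leq M$. I would define $\cW$ recursively, starting from the finite family of standard dyadic cubes of sidelength $2^M$ that meet $\Omega$ and processing each candidate cube $Q$ as follows: if $Q \subset \Omega$, place $Q$ in $\cW$; if $Q \cap \Omega = \emptyset$, discard $Q$; otherwise subdivide $Q$ into its $2^d$ dyadic children of sidelength $\delta_Q/2$ and reapply the rule to each child. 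The recursion terminates at scale $2^m$, where any remaining unresolved cube that meets $\Omega$ is placed in $\cW$. All cubes produced are dyadic, hence have pairwise disjoint interiors and sidelengths that are integer powers of $2$.

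Properties~(1)--(3) would follow directly from the construction. Every $x \in \Omega$ lies inside a dyadic cube at each scale in $[2^m, 2^M]$, and the recursion must halt at one of these within $\cW$, proving~(1). The sidelengths in $\cW$ belong to $[2^m, 2^M] \subset [\eta, H]$, proving~(3). For~(2), I would observe that a cube with $\delta_Q > 2^m$ can only be placed in $\cW$ via the ``$Q \subset \Omega$'' branch; since $2^{m-1} < \eta$ forces $2^m < 2\eta$, the hypothesis $\delta_Q \geq 2\eta$ gives $\delta_Q > 2^m$, whence $Q \subset \Omega$.

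The main obstacle is the counting bound~(4), and this is where the Ahlfors regularity enters in an essential way. Fix a scale $\delta = 2^k$ with $m < k \leq M$, and for each $Q \in \cW$ of sidelength $\delta$ let $\tilde Q$ denote its dyadic parent of sidelength $2\delta$. Since the algorithm subdivided $\tilde Q$, it must meet both $\Omega$ and its complement, whence $\tilde Q$ contains some point $p_{\tilde Q} \in \partial \Omega$. As siblings share their parent, the count of $Q \in \cW$ at scale $\delta$ is at most $2^d$ times the number of distinct parents, and to bound the latter I would use the following overlap estimate: for each $x \in \R^d$, the number of parents $\tilde Q$ with $p_{\tilde Q}$ lying in the axis-aligned cube $x + [-\delta, \delta]^d$ is at most $3^d$, since such a $\tilde Q$ has its center within $\ell^\infty$-distance $2\delta$ of $x$ and distinct dyadic centers at scale $2\delta$ are $\ell^\infty$-separated by exactly $2\delta$. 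Combining this overlap bound with the Ahlfors lower estimate $\cH_{d-1}(\partial \Omega \cap (p_{\tilde Q} + [-\delta, \delta]^d)) \geq \cH_{d-1}(\partial \Omega \cap B(p_{\tilde Q}, \delta)) \geq \kappa_{\partial \Omega} \delta^{d-1}$ (valid because $\delta \leq 2^M \leq H$) and integrating against $\cH_{d-1}$ restricted to $\partial \Omega$ produces $\#\{\text{parents}\} \leq 3^d \cH_{d-1}(\partial \Omega)/(\kappa_{\partial \Omega} \delta^{d-1})$, and the extra factor $2^d$ delivers the claimed $6^d$ prefactor. The edge case $\delta = 2^M$, for which the parent cube would have sidelength $2^{M+1} > H$ and need not meet $\partial \Omega$, requires a small modification: one may either initiate the recursion one level higher with forced subdivision at scale $2^{M+1}$, or bound the number of $\cW$-cubes of side $2^M$ directly by a volumetric/isoperimetric argument on $\Omega$, absorbing the constants using $\kappa_{\partial \Omega} \leq 1$.
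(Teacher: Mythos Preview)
Your proposal is correct and follows essentially the same Whitney-stopping-time-plus-Ahlfors-covering strategy as the paper. Two small points of divergence are worth noting. First, the paper embeds $\Omega$ in a single large dyadic cube $Q^0 \supsetneq \Omega$ and defines $\cW$ as the \emph{maximal} ``OK'' subcubes of $Q^0$; this guarantees that every $Q \in \cW$ has a non-OK dyadic parent, from which one deduces $3Q \cap \partial\Omega \neq \emptyset$ uniformly across all scales, and the covering step then uses $S_Q = 5Q \cap \partial\Omega$ together with a volume comparison to get multiplicity $\leq 6^d$ --- this eliminates your top-scale edge case entirely, at the cost of a slightly different bookkeeping for the constant (your $2^d \times 3^d$ split via sibling count and parent-center separation is an equally valid route to $6^d$). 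Second, there is a minor indexing slip: your parent-was-subdivided argument is stated for $m < k \leq M$, but it is actually valid precisely for $m \leq k < M$ (at $k = m$ the parent \emph{was} subdivided by the algorithm, whereas at $k = M$ it was not), so as written the scale $k = m$ is never covered; the fix is simply to shift the range and keep $k = M$ as the sole edge case.
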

\begin{proof}
For $\alpha > 0$ and a cube $Q$ with center $c_Q$ and sidelength $\delta_Q$,   we denote by $\alpha Q$ the cube with center $c_Q$ and sidelength $\alpha \delta_Q$. In other words, $\alpha Q$ is obtained from $Q$ via concentric scaling by a factor of $\alpha$.

We refer to a cube $Q \subset \R^d$ as \emph{dyadic} if $\delta_Q = 2^k$ and $c_Q \in 2^k \Z^d$ for some integer $k \in \Z$.

Because $\Omega$ is bounded, by applying a translation, we may assume that $\Omega$ is contained in the orthant $[0,\infty)^d$, and so there exists a dyadic cube $Q^0$ satisfying $\Omega \subsetneq Q^0$ and $\delta_{Q^0} > 2 \eta$. 

We say a dyadic cube $Q \subset Q^0$ is OK provided that either (a) $\delta_Q < 2 \eta$ and $Q \cap \Omega \neq \emptyset$, or (b) $Q \subset \Omega$. Let $\cW$ be the set of \emph{maximal} dyadic cubes $Q$ that are OK -- that is, $Q$ is OK, but no dyadic cube strictly containing $Q$ is OK. Because two dyadic cubes are either nested or have disjoint interiors, the cubes in $\cW$ have pairwise disjoint interiors.

\noindent{\it Proof of 1:} Each  $x \in \Omega$ is in a dyadic cube satisfying (a), therefore each $x \in \Omega$ is in an OK cube. So, $\cW$ covers $\Omega$, proving condition 1.

\noindent{\it Proof of 2:} 
If $Q \in \cW$ with $\delta_Q \geq 2 \eta$ then $Q$ cannot satisfy (a), so $Q$ must satisfy (b) -- thus, $Q \subset \Omega$. This proves condition 2. 

\noindent{\it Proof of 3:} Note that $\delta_Q \geq \eta$ for $Q \in \cW$ -- if not, $\delta_Q <\eta$, and the dyadic parent $Q^+ \subset Q^0$ of $Q$ satisfies (a), contradicting the maximality of $Q$. To prove $\delta_Q \leq \cH_{d-1}(\partial \Omega)^{1/(d-1)}$, notice that this happens if  $\delta_Q \leq 2 \eta \leq \cH_{d-1}(\partial \Omega)^{1/(d-1)}$. So we assume that $\delta_Q > 2 \eta$. By condition 2, $Q  \subset \Omega$. Let $\pi : \R^d \rightarrow \R^d$ denote the orthogonal projection onto the hyperplane $\{x_1 = 0\}$. Then $\pi(Q) \subset \pi(\Omega) = \pi(\partial \Omega)$. Using that $\cH_{d-1}(\pi(\partial \Omega)) \leq \cH_{d-1}(\partial \Omega)$, and $\cH_{d-1}(\pi(Q)) = \delta_Q^{d-1}$, we complete the proof of condition 3. 

\noindent{\it Proof of 4:} We first establish that
\begin{equation}\label{eqn:nearbd}
3Q \cap \partial \Omega \neq \emptyset \qquad \mbox{ for each } Q \in \cW.
\end{equation}
To prove \eqref{eqn:nearbd}, let $Q \in \cW$. Whether $Q$ satisfies (a) or (b), it holds that $3Q \cap \Omega \neq \emptyset$. We claim  that $3Q$ contains a point of $\R^d \setminus \Omega$. If not, then $3Q \subset \Omega$, and since $Q^+ \subset 3Q$, the cube $Q^+$ satisfies property (b), contradicting the maximality of $Q$. 
Since $3Q$ contains both points of $\Omega$ and $\R^d \setminus \Omega$,  it follows that $3Q \cap \partial \Omega \neq \emptyset$,  establishing \eqref{eqn:nearbd}.

For $Q \in \cW$,  put $S_Q:= 5Q \cap \partial \Omega$.
We will prove  two bounds which quickly yield condition 4. 
First, we have a lower bound on the Hausdorff measure of  $S_Q$:
\begin{equation}\label{bound:Hausdorff_measure}
\cH_{d-1}(S_Q) \geq  \kappa_{\partial \Omega} \delta_Q^{d-1}
\quad \text{for each  } Q \in \cW.
\end{equation}
Next, we have that, for fixed $\delta > 0$, the family $\{ S_Q : Q \in \cW, \; \delta_Q = \delta\}$ of subsets of $\partial \Omega$ has covering number at most $6^d$ in the sense that
\begin{equation}\label{bound:covering}
\#\{Q \in \cW: x \in S_Q, \; \delta_Q = \delta \} \leq 6^d
\quad \text{for each  } x \in \partial \Omega.
\end{equation}
By summing both sides of \eqref{bound:Hausdorff_measure}  over all  $Q \in \cW$ with $\delta_Q = \delta$, and then using  the bound \eqref{bound:covering} and the additivity of the Hausdorff measure, we obtain 
\[
\# \{ Q \in \cW : \delta_Q = \delta \} \cdot \kappa_{\partial \Omega} \delta^{d-1} \leq \sum_{Q \in \cW, \; \delta_Q = \delta} \cH_{d-1}(S_Q) \leq 6^d \cH_{d-1}(\partial \Omega).
\]
This completes the proof of condition 4 subject to proving the bounds  \eqref{bound:Hausdorff_measure} and \eqref{bound:covering}, which we now treat in succession. 

We write $\|x \|_\infty = \max_i |x_i|$ for the $\ell^\infty$ (maximum) norm of a vector $x \in \R^d$. Observe that if $Q$ is a cube then $x \in Q$ if and only if $\| x - c_Q \|_\infty \leq \delta_Q/2$.

To prove \eqref{bound:Hausdorff_measure}, fix $Q \in \cW$.
By \eqref{eqn:nearbd}, there is a point $x \in 3Q \cap \partial \Omega$. We claim that $B(x,\delta_Q) \subset 5Q$. To see this let $y \in B(x,\delta_Q)$, and note that 
\[
\| y - c_Q \|_\infty \leq \| y - x \|_\infty + \| x - c_Q \|_\infty \leq \| y - x \| + \| x - c_Q \|_\infty \leq \delta_Q + 3 \delta_Q/2 = 5 \delta_Q/2,
\]
hence, $y \in 5Q$, proving the desired containment. By the Ahlfors regularity condition \eqref{Ahlfors_cond}, and since $\delta_Q \leq \cH_{d-1}(\partial\Omega)^{1/(d-1)}$ by condition 3, we then have 
\[
\cH_{d-1}(S_Q) = \cH_{d-1}(\partial \Omega \cap 5Q) \geq \cH_{d-1}(\partial \Omega \cap B(x, \delta_Q)) \geq \kappa_{\partial \Omega} \delta_Q^{d-1}.
\]
This completes the proof of \eqref{bound:Hausdorff_measure}. 

To prove \eqref{bound:covering}, fix $x \in \partial \Omega$ and $\delta > 0$, and let $Q_x$ be the closed cube centered at $x$ of sidelength $6 \delta$. If $x \in 5Q$ for some $Q \in \cW$ with $\delta_Q = \delta$, then  $Q \subset Q_x$. 
To see this, let $y \in Q$ and estimate $\|x-y\|_\infty \leq \| x - c_Q \|_\infty + \| c_Q - y \|_\infty \leq 5 \delta_Q/2 + \delta_Q/2 = 3 \delta$, which implies that $y \in Q_x$. 
Returning to \eqref{bound:covering}, let $Q \in \cW$ satisfy $x \in S_Q$ and $\delta_Q = \delta$. The above tells us $Q \subset Q_x$. Since the volume of $Q_x$ is $(6\delta)^d$, the cubes of $\cW$ have pairwise disjoint interiors, and each $Q$ has volume $\delta^d$, a comparison of volumes shows that there are at most $6^d$ cubes $Q \in \cW$ satisfying $x \in S_Q$ and $\delta_Q = \delta$.
This completes the proof of \eqref{bound:covering} and also the proof of condition 4.  
\end{proof}

\section{Analytical framework:  Schatten operators, spatio-spectral limiting  operators, and decomposition techniques}\label{framework}

\subsection{Preliminaries on Schatten operators}

Write $\| T \|_p$ for the Schatten $p$-norm of a bounded operator $T : \cH \rightarrow \cH$ on a complex Hilbert space $\cH$, given by $\| T \|_p = \mathrm{tr}( (T^*T)^{p/2})^{1/p}$ for $p \in (0,\infty)$, and $\| T \|_\infty = \| T \|_{\op}$ for the operator norm of $T$. Observe that $\| T \|_{1} = \tr(|T|)$. 
If $\| T \|_p$ is finite, we say $T$ is a $p$-Schatten operator and it belongs to  the  Schatten $p$-class $\cS_p$.  The spaces $\cS_1$ and $\cS_2$ consist of the trace class operators and Hilbert-Schmidt operators on $\cH$,  respectively. 
In a slight abuse of notation, we use the term ``norm" to refer to $\|\cdot\|_p$ for $p \in (0,1)$, even though it is not subadditive. However, for $0 < p \leq 1$, the $p$'th power of the Schatten $p$-norm is subadditive \cite{McCarthy67}, i.e.
\begin{equation}\label{schatten_subadditive:eqn}
\| T_1 + T_2 \|_p^p \leq \| T_1 \|_p^p + \| T_2 \|_p^p, 
\end{equation}
for any operators $T_1$ and $T_2$ in $\cS_p$.

If $T$ is a compact, positive semidefinite  (self-adjoint) operator on a complex Hilbert space, we denote the positive spectrum   of $T$ by $\sigma(T) := ( \lambda_n(T) )_{n \geq 1}$ where $\lambda_1(T) \geq \lambda_2(T) \geq \lambda_3(T) \geq \cdots > 0$ are the positive eigenvalues of $T$ counted with multiplicity. Observe that 
for finite $p\in (0,\infty)$, the Schatten $p$-norm  of $T$ admits the representation, 
\[
\| T \|_p = \left( \sum_{n=1}^\infty \lambda_n(T)^p \right)^{1/p}, 
\]
and,  $\|T\|_\infty = \lambda_1(T)$.

For any compact operator $0 \leq T \leq 1$ and $\epsilon \in (0,1/2)$, we define the \emph{eigenvalue counting function} of $T$ as  
\begin{equation}\label{counting-function}
M_\epsilon(T):=\# \{ n : \lambda_n(T) \in (\epsilon,1-\epsilon)\}.
\end{equation}
Define \( g_{\epsilon,p} : [0,1] \rightarrow \mathbb{R} \) by \( g_{\epsilon,p}(t) = (\epsilon - \epsilon^2)^{-p} (t - t^2)^{p} \). Observe that \( g_{\epsilon,p}(t) \geq 0 \) for \( t \in [0,1] \), and we have \( g_{\epsilon,p}(t) \geq 1 \) for \( t \in (\epsilon, 1-\epsilon) \). Hence, \( g_{\epsilon,p}(t) \geq \chi_{(\epsilon,1-\epsilon)}(t) \) for \( t \in [0,1] \).
Therefore, for any $p > 0$,
\begin{equation}\label{cheb:ineq}  
M_\epsilon(T)  =  \text{tr}(\chi_{(\epsilon, 1-\epsilon)} (T)) \leq \text{tr}(g_{\epsilon,p}(T)) 
=
(\epsilon - \epsilon^2)^{-p} \| T - T^2 \|_p^p \leq 2^p \epsilon^{-p} \| T - T^2 \|_p^p.
\end{equation}

According to \eqref{cheb:ineq}, a bound on the Schatten norm $\| T - T^2 \|_p^p$ implies a bound on $M_\epsilon(T)$. The following result is adapted from  Lemma 5.1 of \cite{MaRoSp23} and gives a partial converse to \eqref{cheb:ineq}.
\begin{lemma}\label{lem1}
Suppose that for a compact operator $0 \leq T \leq 1$, there are constants $a, A_1, A_2 > 0$ such that for every $\epsilon \in (0,1/2)$, 
\[
M_\epsilon(T) \leq A_1 \ln(\epsilon^{-1})^a  + A_2 \ln(\epsilon^{-1}).
\]
Then for every $0 < p \leq 1$,
\[
\| T - T^2 \|_p^p \leq \Gamma(a+1) A_1 p^{-a} + A_2 p^{-1}.
\]
\end{lemma}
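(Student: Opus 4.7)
\medskip

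\noindent\textbf{Proof plan for Lemma \ref{lem1}.}

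The plan is to express $\| T - T^2 \|_p^p$ as a one-dimensional integral of the counting function $M_\epsilon(T)$ via a layer-cake formula, substitute in the hypothesized bound, and evaluate the resulting integrals using a change of variables that produces a Gamma function. First I would record the spectral representation: since $T$ is compact, self-adjoint, and $0 \leq T \leq 1$, its nonzero eigenvalues $(\lambda_n)$ lie in $(0,1]$, and
\[
\| T - T^2 \|_p^p \;=\; \sum_n \bigl(\lambda_n(1-\lambda_n)\bigr)^p.
\]
For each $n$ set $\mu_n := \min\{\lambda_n, 1 - \lambda_n\} \in [0, 1/2]$; then $\lambda_n(1-\lambda_n) = \mu_n(1 - \mu_n) \leq \mu_n$, so the bound reduces to estimating $\sum_n \mu_n^p$. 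Observe as well the identification
\[
M_\epsilon(T) \;=\; \#\{n : \lambda_n \in (\epsilon, 1-\epsilon)\} \;=\; \#\{n : \mu_n > \epsilon\},
\]
which is what allows the hypothesis to enter.

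Next I would apply the layer-cake identity $\mu_n^p = \int_0^{\mu_n} p t^{p-1}\, dt$ and swap sum and integral (using that $\mu_n \leq 1/2$) to get
\[
\sum_n \mu_n^p \;=\; \int_0^{1/2} p\, t^{p-1}\, M_t(T)\, dt.
\]
Plugging in the hypothesis then yields
\[
\| T - T^2 \|_p^p \;\leq\; A_1 \int_0^{1/2} p\, t^{p-1} \ln(t^{-1})^a\, dt \;+\; A_2 \int_0^{1/2} p\, t^{p-1} \ln(t^{-1})\, dt.
\]

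Finally I would evaluate each integral using the substitution $u = p \ln(t^{-1})$, under which $p\, t^{p-1}\, dt = -e^{-u}\, du$ and $\ln(t^{-1}) = u/p$. Extending the range of integration from $(0,1/2]$ to $(0,1]$ (which only enlarges the integrands, since they are nonnegative) one obtains
\[
\int_0^1 p\, t^{p-1} \ln(t^{-1})^a\, dt \;=\; p^{-a}\!\int_0^\infty u^a e^{-u}\, du \;=\; p^{-a}\,\Gamma(a+1),
\]
and the analogous computation with $a = 1$ gives the bound $p^{-1}$ for the second integral. Combining these two evaluations yields the claimed inequality $\| T - T^2 \|_p^p \leq \Gamma(a+1) A_1 p^{-a} + A_2 p^{-1}$.

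There is no real obstacle; the only points that require attention are the symmetrization via $\mu_n$ (which ensures the hypothesis on $M_\epsilon$ is applied in the regime $\epsilon \in (0,1/2)$ where it is valid) and the Gamma-function change of variables, which is what produces the precise constant $\Gamma(a+1)$ and the powers $p^{-a}$ and $p^{-1}$ matching the statement.
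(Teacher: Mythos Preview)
Your proof is correct and follows essentially the same approach as the paper: both reduce to the integral inequality $\|T-T^2\|_p^p \leq \int_0^{1/2} p\,t^{p-1} M_t(T)\,dt$ (the paper via the pointwise bound $(x-x^2)^p \leq p\int_0^{1/2}\chi_{(t,1-t)}(x)\,t^{p-1}\,dt$, you via the equivalent symmetrization $\mu_n=\min\{\lambda_n,1-\lambda_n\}$), then extend to $(0,1]$ and evaluate with the substitution $u=p\ln(1/t)$ to produce the Gamma factor.
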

\begin{proof}
Observe that 
\begin{align}\label{inequality}
(x-x^2)^p \leq  p\int_0^{1/2} \chi_{(t,1-t)}(x)  t^{p-1} dt \qquad \mbox{for } 0 \leq x \leq 1.
\end{align}
To prove \eqref{inequality}, it suffices to assume $0 \leq x \leq 1/2$ by symmetry of  the  functions $x\to (x-x^2)^p$  and 
$x\to  \int_0^{1/2} \chi_{(t,1-t)}(x) p t^{p-1} dt$
about $x=1/2$; note $\chi_{(t,1-t)}(x) = \chi_{(0, x)}(t)$ if $t , x \in [0,1/2]$ and so the right-hand side of \eqref{inequality} evaluates to $ x^p \geq x^p(1-x)^p = (x-x^2)^p $.

Since $\mathrm{tr}(\chi_{(t,1-t)}(T))=M_t(T)$ for $t \in (0,1/2)$, from \eqref{inequality}  we deduce that
\begin{align*}
\| T - T^2 \|_p^p \leq \int_0^{1/2} M_t(T) p t^{p-1} dt &\leq  \int_0^{1} \left( A_1 \ln(t^{-1})^a + A_2 \ln(t^{-1}) \right)  p t^{p-1} dt \\
&= \int_0^{\infty} \left( A_1 (u/p)^a + A_2 (u/p) \right)  e^{-u} du \\
&=  \Gamma(a+1) A_1 p^{-a} + A_2 p^{-1},
\end{align*}
using the change of variables $u = p\ln(1/t)$, so that $t = e^{-u/p}$ and $dt = - (1/p) e^{-u/p} du$.
\end{proof}

\subsection{Preliminaries on spatio-spectral limiting operators}
In tandem with the SSLO \(T_{F,S} = B_S P_F B_S\), the corresponding \emph{Hankel operator} is defined by
\[ H_{F,S} := (1-B_S) P_F B_S : L^2(\R^d) \rightarrow L^2(\R^d). \] 
It is clear that the following identity is satisfied: 
\begin{equation}\notag 
T_{F,S} - T_{F,S}^2 = H_{F,S}^*  H_{F,S}.
\end{equation}
Thus, for any $p > 0$,
\begin{equation}\label{op_fact2:eqn}
\| T_{F,S} - T_{F,S}^2 \|_p^p = \| H_{F,S} \|_{2p}^{2p}.
\end{equation}

We will make use of the following trace and spectral identities in Sections \ref{sec:dd1} and \ref{sec:dd2}. For the elementary proofs of these identities, see Lemma 1 of \cite{Landau67}.

\begin{proposition}
\label{prop:SSLO_props} 
For measurable sets $F, S \subset \mathbb{R}^d$ of finite Lebesgue measure, $\tr(T_{F,S}) = (2\pi)^{-d}  |F| \cdot |S|$. Furthermore, the ordered sequence of positive eigenvalues of $T_{S,F} = B_F P_S B_F$ and $T_{F,S} =B_SP_F B_S$ are identical, i.e., $\lambda_k(S,F) = \lambda_k(F,S)$ for all $k$.  
\end{proposition}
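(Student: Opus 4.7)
The plan is to prove the trace identity and the eigenvalue identity separately. For the trace, I would use the cyclic property of the trace together with the projection identity $B_S^2 = B_S$ to reduce $\tr(T_{F,S}) = \tr(B_S P_F B_S) = \tr(P_F B_S^2) = \tr(P_F B_S)$. Since $B_S$ is the integral operator with kernel $K_S(x-y) := (2\pi)^{-d}\int_S e^{i(x-y)\cdot\xi}\,d\xi$, the operator $P_F B_S$ has kernel $\chi_F(x) K_S(x-y)$, and integrating along the diagonal yields $\int \chi_F(x) K_S(0)\,dx = (2\pi)^{-d}|F||S|$, as claimed.

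For the eigenvalue identity, the plan is to chain three elementary reductions. First, let $V := (2\pi)^{-d/2}\mathcal{F}$, which is unitary by Plancherel with the paper's normalization. From $B_S = \mathcal{F}^{-1} P_S \mathcal{F}$ one has $V B_S V^{-1} = P_S$ and $V B_F V^{-1} = P_F$; combined with $V^2 = \mathcal{R}$ (the reflection $f(x) \mapsto f(-x)$, which follows from $\mathcal{F}^2 = (2\pi)^d \mathcal{R}$), this yields $V P_F V^{-1} = V^2 B_F V^{-2} = \mathcal{R} B_F \mathcal{R} = B_{-F}$, where $-F := \{-x : x \in F\}$. Hence
\[
V T_{F,S} V^{-1} = (V B_S V^{-1})(V P_F V^{-1})(V B_S V^{-1}) = P_S B_{-F} P_S,
\]
so $T_{F,S}$ and $P_S B_{-F} P_S$ are unitarily equivalent. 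Second, applying the standard fact that $A^*A$ and $A A^*$ share their nonzero spectrum (with multiplicities) to $A := P_S B_{-F}$ gives $A^* A = B_{-F} P_S B_{-F} = T_{S,-F}$ and $A A^* = P_S B_{-F}^2 P_S = P_S B_{-F} P_S$, so $P_S B_{-F} P_S$ and $T_{S,-F}$ have identical nonzero eigenvalues. Third, $T_{S,-F}$ and $T_{S,F}$ are related by the antiunitary conjugation $C : f \mapsto \bar f$: since $K_{-F}(z) = \overline{K_F(z)}$ pointwise, one has $B_{-F} = C B_F C$, and since $\chi_S$ is real, $C P_S C = P_S$, whence $T_{S,-F} = C T_{S,F} C$. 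Antiunitary conjugation preserves the (real) spectrum of the self-adjoint $T_{S,F}$, so $\lambda_k(S,-F) = \lambda_k(S,F)$. Chaining these facts gives $\lambda_k(F,S) = \lambda_k(S,F)$ for all $k \geq 1$.

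The main obstacle is careful bookkeeping of the Fourier normalization constants $(2\pi)^d$ under the non-unitary convention used in the paper, together with a clean handling of the reflection $-F$ that inevitably enters via $V^2 = \mathcal{R}$, before it is removed by the antiunitary-conjugation argument. Verifying the identity $\mathcal{R} B_F \mathcal{R} = B_{-F}$ (equivalently $B_{-F} = C B_F C$) is the small computation on which the reflection bookkeeping hinges.
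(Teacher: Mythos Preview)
The paper does not prove this proposition; it simply cites Lemma~1 of Landau \cite{Landau67}. Your argument is therefore not being compared against an in-paper proof, and on the whole it is correct and self-contained.

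Two small comments. For the trace identity, the manipulation $\tr(B_S P_F B_S)=\tr(P_F B_S)$ is not quite justified: $P_F B_S$ is Hilbert--Schmidt but need not be trace class for general measurable $F,S$, so $\tr(P_F B_S)$ may be undefined. The clean fix is to stop one step earlier: write $T_{F,S}=(P_F B_S)^*(P_F B_S)$ with $P_F B_S\in\cS_2$, so $\tr(T_{F,S})=\|P_F B_S\|_2^2=\int_{\R^d}\int_{\R^d}|\chi_F(x)K_S(x-y)|^2\,dx\,dy=(2\pi)^{-d}|F||S|$; equivalently, cycle only to $\tr(P_F B_S P_F)$ (a product of two Hilbert--Schmidt factors, hence trace class) and integrate that kernel on the diagonal.

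For the eigenvalue identity your three-step chain is correct, including the reflection bookkeeping and the antiunitary step. It can be marginally streamlined by applying the $A^*A/AA^*$ trick first, with $A=B_S P_F$, to pass from $T_{F,S}$ directly to $P_F B_S P_F$, and only then conjugating by $V$; this lands on $B_{-F}P_S B_{-F}=T_{S,-F}$ in one move and avoids the intermediate operator $P_S B_{-F}P_S$. Either way, the appearance of $-F$ is genuine under this Fourier normalization and your removal of it via $C:f\mapsto\bar f$ is the right device.
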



To estimate the distribution of the eigenvalues of an SSLO $T_{F,S}$ we decompose it into components $T_{Q_F,Q_S}$, where $Q_F$ and $Q_S$  belong to families of component cubes contained in (or located near the boundary of) the domains $F$ and $S$, respectively. We will later explain how to pass from estimates on the component operators $T_{Q_F,Q_S}$ to estimates on $T_{F,S}$ (see Lemma \ref{decomp:lem} for the case of finite decompositions of the spatial domain $F$, with $S$ fixed).  Estimates on the component operators are addressed in the following lemma, which is stated equivalently as Theorem 1.2 in \cite{ArieAzita23}; the proof of this result relies on the results of \cite{Karnik21} for one-dimensional SSLOs. 
\begin{lemma}[Bounds on SSLO eigenvalues associated to spatial and frequency cubes]
\label{lem3}
Let $Q_1$ and $Q_2$ be axis-parallel cubes in $\R^d$, of sidelengths $\delta_{Q_1} = \delta_1$ and $\delta_{Q_2} = \delta_2$, satisfying $ \delta_1 \delta_2 \geq 16 $. Consider the spatio-spectral limiting operator operator $T_{Q_1,Q_2} =  B_{Q_2} P_{Q_1} B_{Q_2}$ on $L^2(\R^d)$. Then for every $\epsilon \in (0,1/2)$,
\begin{equation}
\label{M_eps1:eqn}
M_\epsilon(T_{Q_1,Q_2})  \lesssim \log(\delta_1 \delta_2)^d \log(\epsilon^{-1})^d +  (\delta_1 \delta_2)^{d-1} \log( \delta_1 \delta_2) \log(\epsilon^{-1}).
\end{equation}
\end{lemma}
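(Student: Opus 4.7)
The plan is to exploit the product structure of SSLOs associated to axis-parallel cubes, reducing the problem to the one-dimensional case, where the quantitative bound \eqref{Romberg-Thm:3} of \cite{Karnik21} can be applied. Writing $Q_1 = \prod_{i=1}^d I_i$ and $Q_2 = \prod_{i=1}^d J_i$ as products of intervals of lengths $\delta_1$ and $\delta_2$, and using that both the Fourier transform and multiplication by an indicator of a product set factor across coordinates, one verifies that $T_{Q_1, Q_2}$ is unitarily equivalent to the tensor product $\bigotimes_{i=1}^d T_{I_i, J_i}$ acting on $L^2(\R)^{\otimes d} \simeq L^2(\R^d)$. After translation and joint rescaling in space and frequency (each a unitary operation on $L^2$), each one-dimensional factor $T_{I_i, J_i}$ is unitarily equivalent to the standard 1D SSLO $T_W = B_{[-W,W]} P_{[0,1]} B_{[-W,W]}$ with $W = \delta_1 \delta_2 / 2$, under the Fourier convention fixed in the paper. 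The hypothesis $\delta_1 \delta_2 \geq 16$ guarantees $W \geq 2\pi$, so \eqref{Romberg-Thm:3} is applicable.

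Since $T_{Q_1,Q_2}$ is a $d$-fold tensor product of identical operators, its eigenvalues are the products $\mu_{k_1} \mu_{k_2} \cdots \mu_{k_d}$ over tuples $(k_1, \ldots, k_d) \in \N^d$, where $\mu_1 \geq \mu_2 \geq \cdots$ is the eigenvalue sequence of $T_W$. The one-dimensional result \eqref{Romberg-Thm:3} yields
\[
M_\eta^{(1)} := \#\{k : \mu_k \in (\eta, 1-\eta)\} \lesssim \log(\delta_1 \delta_2)\log(\eta^{-1}) \qquad \text{for } \eta \in (0,1/2),
\]
while the trace identity (Proposition \ref{prop:SSLO_props}) applied to $T_W$ gives $\sum_k \mu_k = W/\pi = \delta_1\delta_2/(2\pi)$, and Markov's inequality then bounds $N_{1-\eta}^{(1)} := \#\{k : \mu_k > 1-\eta\} \leq \delta_1\delta_2/(2\pi(1-\eta)) \lesssim \delta_1\delta_2$.

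The final step is a combinatorial counting argument. Set $\eta = \epsilon/(2d)$, let $h = N_{1-\eta}^{(1)}$ count the 1D eigenvalues above $1-\eta$, and let $m = M_\eta^{(1)}$ count the 1D eigenvalues in the band $[\eta, 1-\eta]$. A tuple $(k_1,\ldots,k_d)$ produces a product $\prod_i \mu_{k_i} \in (\epsilon,1-\epsilon)$ only if (i) every coordinate satisfies $\mu_{k_i} \geq \eta$, else $\prod_i \mu_{k_i} \leq \eta < \epsilon$; and (ii) at least one coordinate satisfies $\mu_{k_i} \leq 1-\eta$, else $\prod_i \mu_{k_i} > (1-\eta)^d \geq 1-d\eta = 1-\epsilon/2 > 1-\epsilon$ by Bernoulli's inequality. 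Counting the tuples satisfying both conditions and using the mean value inequality $a^d - b^d \leq d(a-b)a^{d-1}$ for $a \geq b \geq 0$,
\[
M_\epsilon(T_{Q_1,Q_2}) \leq (h+m)^d - h^d \leq d \cdot m \cdot (h+m)^{d-1}.
\]
Substituting $m \lesssim \log(\delta_1\delta_2)\log(\epsilon^{-1})$ and $h+m \lesssim \delta_1\delta_2 + \log(\delta_1\delta_2)\log(\epsilon^{-1})$, and expanding via $(a+b)^{d-1} \lesssim a^{d-1} + b^{d-1}$, yields precisely the two-term bound \eqref{M_eps1:eqn}: the $(\delta_1\delta_2)^{d-1}$ piece pairs with the lone factor of $m$, while the ``all-middle'' contribution gives the $\log(\delta_1\delta_2)^d \log(\epsilon^{-1})^d$ term.

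The main obstacle is the careful verification of the unitary equivalence between $T_{Q_1, Q_2}$ and the rescaled tensor product $\bigotimes_{i=1}^d T_W$, along with tracking the value $W = \delta_1\delta_2/2$ under the paper's Fourier normalization so that the hypothesis $\delta_1\delta_2 \geq 16$ correctly triggers the applicability of \eqref{Romberg-Thm:3}. Once that structural reduction is in place, the combinatorial argument is elementary: the threshold choice $\eta \sim \epsilon/d$ is precisely calibrated to force every admissible product into $(\epsilon,1-\epsilon)$ to contain at least one genuinely intermediate 1D factor, which is what makes the factor of $m$ (rather than a higher power) appear in the final estimate.
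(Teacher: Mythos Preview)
Your proof is correct. The paper, however, does not reprove this from scratch: after an affine change of variables normalizing to $Q_1=[-r,r]^d$, $Q_2=[0,1]^d$ with $r=\delta_1\delta_2/2\geq 8$, it simply quotes Theorem~1.2 of \cite{ArieAzita23}, which already gives $M_\epsilon \lesssim \max\{r^{d-1}\log(r)\log(\epsilon^{-1}),\,(\log(r)\log(\epsilon^{-1}))^d\}$ in the cube--cube case, and then rewrites the maximum as a sum. Your tensor-product factorization together with the combinatorial count of product eigenvalues via the one-dimensional bound \eqref{Romberg-Thm:3} is in the same spirit as the argument underlying that cited result (which, as the paper notes, itself rests on \cite{Karnik21}); you are in effect reconstructing a proof of the relevant special case of Theorem~1.2 of \cite{ArieAzita23}. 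The benefit of your route is that it is self-contained within this document and makes the mechanism transparent---one sees exactly how the two terms in \eqref{M_eps1:eqn} arise from the ``many large, one intermediate'' and ``all intermediate'' configurations---while the paper's approach keeps the exposition here short by deferring to the companion work.
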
 
\begin{proof}
An affine change of variables in the spatial and frequency variables of the form $(x,\xi) \mapsto (A x + x_0, A^{-1} \xi + \xi_0)$ ($A>0$) preserves the eigenvalues of a spatio-spectral limiting operator. Thus, we may reduce to the case  $Q_1 = [ -r,r]^d$, and $Q_2 = [0,1]^d$ for some $r \geq 8$, where $\delta_1 \delta_2 = 2r$. By Theorem 1.2 of \cite{ArieAzita23},
\begin{align*}
 M_\epsilon(T_{Q_1,Q_2}) 
&\lesssim \max \{ r^{d-1} \log(r) \log(\epsilon^{-1}), (\log(r) \log(\epsilon^{-1}))^d\} \\
&\sim  (\log(r) \log(\epsilon^{-1}))^d + r^{d-1} \log(r) \log(\epsilon^{-1}).
\end{align*}
This completes the proof of the lemma. 
\end{proof}

%
%

\subsection{Synthesis of component operators}

\begin{lemma}\label{decomp:lem} 
Let $S$ be a measurable set in $\R^d$, and let $F = \bigcup_{Q \in \cQ} Q$, where $\cQ$ is a finite collection of closed cubes in $\R^d$ with pairwise disjoint interiors. Suppose there exist functions $A(t), B(t) \geq 0$ such that 
\begin{equation}\label{component_plunge_assump_new}
\begin{aligned}
M_\epsilon(T_{Q,S})  \leq A(\delta_Q) \log(\epsilon^{-1})^d + B(\delta_Q) \log(\epsilon^{-1}) \mbox{ for all } Q \in \cQ, \; \epsilon \in (0,1/2).
\end{aligned}
\end{equation}
Then the operator $T_{F,S} = \sum_{Q \in \cQ} T_{Q,S}$ satisfies
\[
M_\epsilon(T_{F,S}) 
\leq 
d! 2^{d+2} A \log(\epsilon^{-1})^d + 8B \log(\epsilon^{-1}) \mbox{ for all } \epsilon \in (0,1/2),
\]
where $A = \sum_{Q \in \cQ} A(\delta_Q)$ and $B= \sum_{Q \in \cQ} B(\delta_Q)$.
\end{lemma}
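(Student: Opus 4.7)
The plan is to bound $M_\epsilon(T_{F,S})$ via the Schatten-norm factorization $\|T_{F,S} - T_{F,S}^2\|_p^p = \|H_{F,S}\|_{2p}^{2p}$ supplied by \eqref{op_fact2:eqn}, exploiting the additivity of the Hankel operators $H_{Q,S}$ across the cube decomposition together with the subadditivity of $p$-th powers of Schatten $p$-norms for $p \leq 1$. The hypothesis \eqref{component_plunge_assump_new} is a hypothesis on eigenvalue counts of the component operators, and Lemma \ref{lem1} is precisely the device for converting it to Schatten-norm estimates.

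First, since the cubes in $\cQ$ have pairwise disjoint interiors and cover $F$, we have $P_F = \sum_{Q \in \cQ} P_Q$, and hence
\[
T_{F,S} = \sum_{Q \in \cQ} T_{Q,S}, \qquad H_{F,S} = (1-B_S) P_F B_S = \sum_{Q \in \cQ} H_{Q,S}.
\]
For any $0 < p \leq 1/2$ we have $2p \leq 1$, so \eqref{schatten_subadditive:eqn} applied to the Schatten $2p$-norm gives
\[
\|T_{F,S} - T_{F,S}^2\|_p^p = \|H_{F,S}\|_{2p}^{2p} \leq \sum_{Q \in \cQ} \|H_{Q,S}\|_{2p}^{2p} = \sum_{Q \in \cQ} \|T_{Q,S} - T_{Q,S}^2\|_p^p,
\]
where the final equality is again \eqref{op_fact2:eqn} applied component-wise.

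Next, I would upgrade the counting bound \eqref{component_plunge_assump_new} to a Schatten bound on $\|T_{Q,S} - T_{Q,S}^2\|_p^p$ via Lemma \ref{lem1} with $a = d$, after converting the base-$2$ logarithms in the hypothesis to natural logarithms (costing $(\ln 2)^{-d}$ and $(\ln 2)^{-1}$ factors). This gives
\[
\|T_{Q,S} - T_{Q,S}^2\|_p^p \leq \frac{d!}{(\ln 2)^d} A(\delta_Q) p^{-d} + \frac{1}{\ln 2} B(\delta_Q) p^{-1},
\]
and summing over $Q \in \cQ$ yields $\|T_{F,S} - T_{F,S}^2\|_p^p \leq \tfrac{d!}{(\ln 2)^d} A \, p^{-d} + \tfrac{1}{\ln 2} B \, p^{-1}$.

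To conclude, I would apply the Chebyshev-type inequality \eqref{cheb:ineq} with the specific choice $p = 1/(2\log(\epsilon^{-1}))$, which lies in $(0, 1/2]$ because $\log(\epsilon^{-1}) \geq 1$ for $\epsilon \in (0,1/2)$; this makes $2^p \epsilon^{-p} \leq 2$, $p^{-d} = 2^d \log(\epsilon^{-1})^d$, and $p^{-1} = 2 \log(\epsilon^{-1})$, producing the desired bound with explicit numerical constants. The only real obstacle is bookkeeping: carefully tracking the $(\ln 2)^{-1}$ factors introduced by Lemma \ref{lem1} and verifying that the resulting coefficients fit inside $d!\, 2^{d+2}$ and $8$ respectively (using $\tfrac{1}{\ln 2} < 3/2$ and the dominance of the target constants); no further operator-theoretic input is needed beyond what is already assembled in Section \ref{framework}.
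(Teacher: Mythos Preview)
Your proposal is correct and follows essentially the same approach as the paper's proof: decompose the Hankel operator, use subadditivity of Schatten $2p$-norms for $2p\le 1$, convert the counting hypothesis to a Schatten bound via Lemma~\ref{lem1}, and then plug in $p=1/(2\log(\epsilon^{-1}))$ into \eqref{cheb:ineq}. In fact you are slightly more careful than the paper on one point: the paper applies Lemma~\ref{lem1} directly without explicitly tracking the $(\ln 2)^{-d}$ and $(\ln 2)^{-1}$ factors arising from the base-$2$/natural-log conversion, whereas you flag this bookkeeping; either way the argument yields the stated bound up to the dimensional constants.
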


\begin{proof}
Recall the Hankel operator is defined by $H_{K,S} = (1-B_S) P_K B_S$ for measurable sets $K,S \subset \R^d$. Observe that  $H_{F,S} = \sum_{Q \in \cQ} H_{Q,S}$, since $F = \bigcup_{Q \in \cQ} Q$ and the cubes in $\cQ$ have pairwise disjoint interiors.

For $\epsilon \in (0,1/2)$, and $0 < p \leq 1/2$, we apply \eqref{cheb:ineq}, the  identity \eqref{op_fact2:eqn}, the subadditivity of $(2p)^{\text{th}}$ powers of Schatten $(2p)$-norms, and \eqref{op_fact2:eqn} again, and obtain:
\begin{equation}\label{bound-x}
\begin{aligned}
M_\epsilon(T_{F,S}) \leq 2^p \epsilon^{-p} \| T_{F,S} - T_{F,S}^2 \|_p^p  = 2^p \epsilon^{-p} \| H_{F,S} \|_{2p}^{2p} &\leq 2 \epsilon^{-p}  \sum_{Q \in \cQ} \| H_{Q,S} \|_{2p}^{2p} \\
&= 2 \epsilon^{-p} \sum_{Q \in \cQ} \| T_{Q,S} - T_{Q,S}^2 \|_{p}^{p}.
\end{aligned}
\end{equation}
We use assumption \eqref{component_plunge_assump_new} to control the individual terms in the sum from \eqref{bound-x}. Indeed, by Lemma \ref{lem1},
\[
\| T_{Q,S} - T_{Q,S}^2 \|_{p}^p 
\leq 
d! A(\delta_Q) p^{-d} + B(\delta_Q)  p^{-1} \mbox{ for any } Q \in \cQ.
\]
We apply this estimate in the bound \eqref{bound-x}, and obtain
\[
M_\epsilon(T_{F,S}) 
\leq 
2\epsilon^{-p} \sum_{Q \in \cQ} \left[ d! A(\delta_Q) p^{-d} +  B(\delta_Q) p^{-1} \right]. 
\]
Now, let $p := \frac{1}{2\log(\epsilon^{-1})} \in (0,1/2]$. Then $\epsilon^{-p} = 2^{ \log(\epsilon^{-1}) p} = \sqrt{2}$, $p^{-1} = 2\log(\epsilon^{-1})$ and $p^{-d} = 2^d \log(\epsilon^{-1})^d$. Therefore, the preceding estimate implies the  conclusion of the lemma. 
\end{proof}

\section{Spatial  domain decomposition}\label{sec:dd1}

In this section we prove an intermediate result corresponding to a version of Theorem \ref{thm1} when the frequency domain $S$ is a cube.

\begin{theorem}\label{stageIdecomp:thm}
Let $d \geq 2$ and let $F \subset \R^d$ be a bounded measurable set having a maximally Ahlfors regular boundary with regularity constant $\kappa_{\partial F} >0$ (see \eqref{Ahlfors_cond} for the definition).  Let $ T_{F} 
:= T_{F,Q_0} = B_{Q_0} P_{F} B_{Q_0}$ be a spatio-spectral limiting operator associated to the region $  F \times Q_0 \subset \R^d \times \R^d$, where $Q_0 \subset \R^d$ is a cube satisfying $\delta_{Q_0}^{d-1} \cH_{d-1}( \partial F ) \geq (32)^{d-1}$. Then for any $\epsilon \in (0,1/2)$,
\[
M_\epsilon(T_F) \lesssim  \delta_{Q_0}^{d-1} \frac{\cH_{d-1}(\partial F)}{\kappa_{\partial F}}  \left[ \log(\epsilon^{-1})^d +  \log (\delta_{Q_0}^{d-1} \cH_{d-1}(\partial F ))^2 \log(\epsilon^{-1}) \right].
\]
\end{theorem}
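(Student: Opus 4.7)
The plan is to apply the Whitney-type cubical covering of Proposition~\ref{whitney_truncated:prop} to the spatial domain $F$ at a scale $\eta$ tuned to $\delta_{Q_0}$, and then adapt the Hankel-based summation argument of Lemma~\ref{decomp:lem} to the decomposition $F = \bigsqcup_{Q \in \cW}(F \cap Q)$.  Set $\eta := 16/\delta_{Q_0}$; the hypothesis $\delta_{Q_0}^{d-1}\cH_{d-1}(\partial F) \geq 32^{d-1}$ ensures $\eta \leq \tfrac{1}{2}\cH_{d-1}(\partial F)^{1/(d-1)}$, so Proposition~\ref{whitney_truncated:prop} yields a finite family $\cW$ of cubes with pairwise disjoint interiors covering $F$.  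Every $Q \in \cW$ satisfies $\delta_Q \geq \eta$, so $\delta_Q\delta_{Q_0} \geq 16$ and Lemma~\ref{lem3} applies.

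Writing $F = \bigsqcup_{Q \in \cW}(F \cap Q)$, the Hankel operator decomposes as $H_{F,Q_0} = \sum_Q H_{F\cap Q,\,Q_0}$.  Subadditivity of the Schatten $(2p)$-norm for $2p \leq 1$ together with identity \eqref{op_fact2:eqn} gives
\[
\|T_{F,Q_0} - T_{F,Q_0}^2\|_p^p \leq \sum_{Q \in \cW} \|T_{F\cap Q,\,Q_0} - T_{F\cap Q,\,Q_0}^2\|_p^p.
\]
To bound each summand via Lemma~\ref{lem1}, I will derive $M_\epsilon$ estimates on $T_{F\cap Q,\,Q_0}$ separately for interior and boundary cubes.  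For interior cubes ($\delta_Q \geq 2\eta$), Proposition~\ref{whitney_truncated:prop}(2) gives $F \cap Q = Q$, so Lemma~\ref{lem3} yields
\[
M_\epsilon(T_{F\cap Q,\,Q_0}) \lesssim \log(\delta_Q\delta_{Q_0})^d \log(\epsilon^{-1})^d + (\delta_Q\delta_{Q_0})^{d-1}\log(\delta_Q\delta_{Q_0})\log(\epsilon^{-1}).
\]
For boundary cubes ($\delta_Q < 2\eta$, so $F \cap Q \subsetneq Q$), the operator inequality $T_{F\cap Q,\,Q_0} \leq T_{Q,Q_0}$ implies $M_\epsilon(T_{F\cap Q,\,Q_0}) \leq N_\epsilon(T_{Q,Q_0}) \leq M_\epsilon(T_{Q,Q_0}) + (1-\epsilon)^{-1}\tr(T_{Q,Q_0})$.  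Since $\delta_Q\delta_{Q_0} < 32$, Lemma~\ref{lem3} and the trace formula $\tr(T_{Q,Q_0}) = (2\pi)^{-d}|Q||Q_0| < (2\pi)^{-d}32^d$ both contribute only dimensional constants, so $M_\epsilon(T_{F\cap Q,\,Q_0}) \lesssim \log(\epsilon^{-1})^d$ on boundary cubes.

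After Lemma~\ref{lem1} (with $a = d$) converts these $M_\epsilon$ bounds into Schatten estimates of the form $\|T_{F\cap Q,\,Q_0} - T_{F\cap Q,\,Q_0}^2\|_p^p \lesssim A(\delta_Q)p^{-d} + B(\delta_Q)p^{-1}$, I sum over $Q \in \cW$ using $\#\{Q \in \cW : \delta_Q = 2^k\} \lesssim \frac{\cH_{d-1}(\partial F)}{\kappa_{\partial F}} 2^{k(1-d)}$ from Proposition~\ref{whitney_truncated:prop}(4).  Reindexing via $j = k + \log_2\delta_{Q_0}$, the interior-cube $A$ sum scales like $\delta_{Q_0}^{d-1}\sum_j 2^{j(1-d)} j^d$, which converges for $d \geq 2$; thus $\sum_Q A(\delta_Q) \lesssim \frac{\cH_{d-1}(\partial F)}{\kappa_{\partial F}}\delta_{Q_0}^{d-1}$.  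The $B$ sum scales like $\delta_{Q_0}^{d-1}\sum_{j=O(1)}^{L/(d-1)} j$, giving $\sum_Q B(\delta_Q) \lesssim \frac{\cH_{d-1}(\partial F)}{\kappa_{\partial F}}\delta_{Q_0}^{d-1} L^2$ with $L := \log(\delta_{Q_0}^{d-1}\cH_{d-1}(\partial F))$; the boundary-cube contributions are absorbed into these bounds.  Applying \eqref{cheb:ineq} with $p = 1/(2\log(\epsilon^{-1}))$ delivers the claimed estimate.

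The main obstacle is the treatment of boundary cubes, where $F \cap Q$ is not a cube and Lemma~\ref{lem3} is not available.  The key observation is that the operator inequality $T_{F\cap Q,\,Q_0} \leq T_{Q,Q_0}$ reduces the question to controlling $N_\epsilon(T_{Q,Q_0})$, which splits into $M_\epsilon(T_{Q,Q_0})$ (handled by Lemma~\ref{lem3}) and $N_{1-\epsilon}(T_{Q,Q_0})$ (handled by the trace bound).  The calibrated choice $\eta\delta_{Q_0} \sim 1$ is essential: it keeps both the $M_\epsilon$ and trace contributions from boundary cubes uniformly bounded by dimensional constants, so that the total boundary contribution stays proportional to the main term $\frac{\cH_{d-1}(\partial F)}{\kappa_{\partial F}}\delta_{Q_0}^{d-1}$ and does not introduce an extra $L$ factor.
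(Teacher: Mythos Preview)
Your proposal is correct but follows a genuinely different route from the paper. The paper does not decompose $F$ exactly; instead it builds an inner/outer sandwich $F_- \subset F \subset F_+$, where $F_+ = \bigcup_{\cW_+} Q$ and $F_-$ drops the smallest-scale cubes. Since $F_\pm$ are unions of genuine cubes, Lemma~\ref{lem3} applies to every piece and Lemma~\ref{decomp:lem} yields the desired bound on $M_\epsilon(T_{F_\pm})$ directly. The passage from $T_{F_\pm}$ to $T_F$ is then handled by the operator inequalities $T_{F_-} \leq T_F \leq T_{F_+}$ together with the crossing-index Lemma~\ref{lem2} and the trace-difference estimate \eqref{trace_diff_est:eqn}. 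Your argument, by contrast, works with the exact partition $F = \bigcup_Q (F\cap Q)$ and handles the non-cube boundary pieces via $M_\epsilon(T_{F\cap Q,Q_0}) \leq N_\epsilon(T_{Q,Q_0}) \leq M_\epsilon(T_{Q,Q_0}) + (1-\epsilon)^{-1}\tr(T_{Q,Q_0}) = O_d(1)$, which is legitimate because $\delta_Q\delta_{Q_0} \in [16,32)$ on those cubes. This sidesteps Lemma~\ref{lem2} entirely and is arguably more direct for this particular theorem. The paper's sandwich method buys modularity: the same $E_1+E_2+E_3$ template is reused verbatim in Section~\ref{sec:dd2} for the frequency domain and underlies the proof of Theorem~\ref{thm2}.
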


Notice that  $ \cH_{d-1}(\partial Q_0) \sim  \delta_{Q_0}^{d-1}$ and $\kappa_{\partial Q_0}\sim 1$, so  the bound in Theorem \ref{stageIdecomp:thm} is similar to (in fact, slightly stronger than) the bound in our main result, Theorem \ref{thm1}, for $S = Q_0$. 

Recall that the regularity constant of a domain boundary is scaling-invariant, and the eigenvalues of the operator $T_{F,Q_0}$ are invariant under the joint rescalings $Q_0 \mapsto t^{-1} Q_0$ and $F \mapsto t F$. Therefore, the inequality in the conclusion of the theorem is scaling-invariant. Thus, we may assume without loss of generality that $\delta_{Q_0} = 1$ and   $\cH_{d-1}(\partial F) \geq (32)^{d-1}$. Hence, $\cH_{d-1}(\partial F)^{1/(d-1)} \geq 32$. 

Our approach will be to first verify the conclusion of Theorem~\ref{stageIdecomp:thm} for certain inner and outer approximations of \(F\) arising from Proposition~\ref{whitney_truncated:prop}. 
We introduce these approximations now and state this result in Lemma \ref{lem6} below.

We employ a dyadic approximation as in Section \ref{dyadic approximation of domains}, and apply Proposition \ref{whitney_truncated:prop} to the domain $F$. For this, consider the family of cubes $\cW = \cW(\eta)$ with $\eta = 16$. Note that $\eta \leq \frac{1}{2} \cH_{d-1}(\partial F)^{1/(d-1)}$ as required in the proposition's hypothesis. We denote this family as $\cW_+$.
Then $\cW_+$ is a cover of $F$ by cubes with pairwise disjoint interiors and sidelengths that are integer powers of $2$, such that 
\begin{equation}\label{W_+:prop1}
16 \leq \delta_{Q_F} \leq \cH_{d-1}(\partial F)^{1/(d-1)} \quad \mbox{for } Q_F \in \cW_+,
\end{equation}
and for each integer $k$,
\begin{equation}
\label{cubecount1}
\# \{ Q_F \in \cW_+ : \delta_{Q_F} = 2^k \} \leq C_1 \frac{\cH_{d-1}( \partial F)}{\kappa_{\partial F}} 2^{k(1-d)}.
\end{equation}
We also define $\cW_- = \{ Q_F \in \cW_+ : \delta_{Q_F} \geq 32\}$.

Because $\cW_+$ is a cover of $F$, 
\[
F_+ := \bigcup_{Q_F \in \cW_+} Q_F \supset F.
\]
Condition 2 of Proposition \ref{whitney_truncated:prop} asserts that $Q_F \subset F$ for all $Q_F \in \cW_-$, thus 
\[
F_- := \bigcup_{Q_F \in \cW_-} Q_F \subset F.
\]
Here, \(F_+\) is the outer approximation of \(F\), and \(F_-\) is the inner approximation of \(F\).

Let $$T_{F_+} := T_{F_+, Q_0} = \sum_{Q_F \in \cW_+} T_{Q_F,Q_0}, \quad \text{and,} \quad T_{F_-} := T_{F_-,Q_0} = \sum_{Q_F \in \cW_-} T_{Q_F,Q_0}.$$ 

\begin{lemma}\label{lem6} 
For any $\epsilon \in (0,1/2)$, 
\[
M_\epsilon(T_{F_\pm}) \lesssim \frac{\cH_{d-1}( \partial F)}{\kappa_{\partial F}} \left[ \log(\epsilon^{-1})^d +  \log (\cH_{d-1}( \partial F))^2 \log(\epsilon^{-1}) \right].
\]
\end{lemma}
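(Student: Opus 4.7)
The plan is to apply Lemma \ref{decomp:lem} with $S = Q_0$ to each of the sums $T_{F_\pm} = \sum_{Q_F \in \cW_\pm} T_{Q_F, Q_0}$, using Lemma \ref{lem3} on the component operators $T_{Q_F, Q_0}$, and then bounding the resulting totals via the counting estimate \eqref{cubecount1}. Throughout, write $N := \cH_{d-1}(\partial F)$ for brevity, so that $N \geq 32^{d-1}$.

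First, for any $Q_F \in \cW_\pm$ we have $\delta_{Q_F} \geq 16$, so $\delta_{Q_F} \cdot \delta_{Q_0} = \delta_{Q_F} \geq 16$ and Lemma \ref{lem3} applies to give
\[
M_\epsilon(T_{Q_F, Q_0}) \lesssim \log(\delta_{Q_F})^d \log(\epsilon^{-1})^d + \delta_{Q_F}^{d-1}\log(\delta_{Q_F})\log(\epsilon^{-1}).
\]
This is exactly the form required by the hypothesis \eqref{component_plunge_assump_new} of Lemma \ref{decomp:lem}, with $A(\delta) \lesssim \log(\delta)^d$ and $B(\delta) \lesssim \delta^{d-1}\log(\delta)$.

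The main step is to sum $A(\delta_{Q_F})$ and $B(\delta_{Q_F})$ over $Q_F \in \cW_\pm$. By \eqref{W_+:prop1}, all cubes $Q_F \in \cW_\pm$ satisfy $\delta_{Q_F} = 2^k$ for some integer $k$ with $4 \leq k \leq k_{\max}$, where $k_{\max} := \lfloor \log(N^{1/(d-1)}) \rfloor \lesssim \log(N)$. Using the counting bound \eqref{cubecount1},
\[
A := \sum_{Q_F \in \cW_\pm} A(\delta_{Q_F}) \lesssim \sum_{k=4}^{k_{\max}} \frac{N}{\kappa_{\partial F}} 2^{k(1-d)} \cdot k^d \lesssim \frac{N}{\kappa_{\partial F}},
\]
where the geometric factor $2^{k(1-d)}$ dominates the polynomial $k^d$ since $d \geq 2$ (so the sum $\sum_{k \geq 4} k^d 2^{k(1-d)}$ converges to a dimensional constant). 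Similarly,
\[
B := \sum_{Q_F \in \cW_\pm} B(\delta_{Q_F}) \lesssim \sum_{k=4}^{k_{\max}} \frac{N}{\kappa_{\partial F}} 2^{k(1-d)} \cdot 2^{k(d-1)} k = \frac{N}{\kappa_{\partial F}} \sum_{k=4}^{k_{\max}} k \lesssim \frac{N}{\kappa_{\partial F}} \log(N)^2.
\]

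Inserting these bounds into the conclusion of Lemma \ref{decomp:lem} yields
\[
M_\epsilon(T_{F_\pm}) \lesssim A \log(\epsilon^{-1})^d + B \log(\epsilon^{-1}) \lesssim \frac{N}{\kappa_{\partial F}}\bigl[\log(\epsilon^{-1})^d + \log(N)^2 \log(\epsilon^{-1})\bigr],
\]
which is precisely the claimed bound. The expected main obstacle is purely bookkeeping: verifying that the telescoping of $A$-sums converges (needing $d \geq 2$, which is assumed) and that the $B$-sum produces only a $\log(N)^2$ overhead — both of which follow transparently from the geometric nature of \eqref{cubecount1}. No additional ideas beyond Lemmas \ref{lem3} and \ref{decomp:lem} and the cube count \eqref{cubecount1} appear to be needed.
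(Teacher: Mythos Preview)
Your proposal is correct and follows essentially the same argument as the paper: apply Lemma~\ref{lem3} to each component $T_{Q_F,Q_0}$, feed the resulting $A(\delta)=\log(\delta)^d$, $B(\delta)=\delta^{d-1}\log(\delta)$ into Lemma~\ref{decomp:lem}, and sum over scales $4\le k\le k_{\max}$ using \eqref{cubecount1}, noting that $\sum_k k^d 2^{k(1-d)}$ converges for $d\ge2$ while $\sum_{k\le k_{\max}} k\lesssim \log(\cH_{d-1}(\partial F))^2$. The only cosmetic difference is that the paper treats $\cW_+$ first and then remarks that the subcollection $\cW_-$ is handled identically, whereas you bundle both cases into $\cW_\pm$ from the start.
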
 
\begin{proof}
We apply Lemma \ref{lem3} to estimate the components $T_{Q_F,Q_0}$ for $Q_F \in \cW_+$. Note that $\delta_{Q_F} \delta_{Q_0} = \delta_{Q_F} \geq 16$ for $Q_F \in \cW_+$. Thus, we obtain
\begin{equation}\label{comp_bound1:eqn}
M_\epsilon(T_{Q_F,Q_0}) 
\lesssim \log(\delta_{Q_F})^d \log(\epsilon^{-1})^d + \delta_{Q_F}^{d-1} \log(\delta_{Q_F}) \log(\epsilon^{-1}) \mbox{ for all } Q_F \in \cW_+, \; \epsilon \in (0,1/2).
\end{equation}

For $k \geq 4$, let $\cW_k$ be the set of all $Q_F \in \cW_+$ of sidelength $\delta_{Q_F}= 2^k$, and put
\begin{equation}\label{kmax_defn}
k_{\max} := \lfloor \log (\cH_{d-1}(\partial F)^{1/(d-1)}) \rfloor.
\end{equation}
Note that $d \geq 2$, and thus, $k_{\max} \leq \log (\cH_{d-1}(\partial F))$. 
According to \eqref{W_+:prop1}, $\cW_+ = \bigcup_{4 \leq k \leq k_{\max}} \cW_k$. By definition of $\cW_-$, also $\cW_- = \bigcup_{5 \leq k \leq k_{\max}} \cW_k$. 

We apply Lemma \ref{decomp:lem} with $(S,F,\cQ)$ in the context of the lemma taken to be $(Q_0,F^+,\cW^+)$ here. Thus, we will obtain an estimate on the eigenvalues of $T_{F_+} = T_{F_+,Q_0}$.
The main condition \eqref{component_plunge_assump_new} of Lemma \ref{decomp:lem} is satisfied  with $A(\delta) = \log(\delta)^d$ and $B(\delta) = \delta^{d-1} \log(\delta)$; see \eqref{comp_bound1:eqn}. It is required to estimate $A:= \sum_{Q_F \in \cW_+} A(\delta_{Q_F})$ and $B := \sum_{Q_F \in \cW_+} B(\delta_{Q_F})$. Write $A = \sum_{k=4}^{k_{\max}} \# \cW_k \cdot A(2^k)$, and use the bound $\# \cW_k \leq C_1 \cH_{d-1}(\partial F) \kappa_{\partial F}^{-1} 2^{k(1-d)}$ given in \eqref{cubecount1}, to obtain
\[
A = \sum_{k=4}^{k_{\max}} \log(2^k)^d \#\cW_k 
\leq C_1
\cH_{d-1}(\partial F) \kappa_{\partial F}^{-1} \cdot \sum_{k=4}^{k_{\max}} k^d 2^{k(1-d)}.
\]
For $d \geq 2$,  $\sum_{k=4}^\infty k^d 2^{k(1-d)} = C< \infty$, where $C$ is bounded by a constant independent of $d$.
In fact, it is not difficult to show that $C \leq 16$. So, $A \lesssim \cH_{d-1}(\partial F) \kappa_{\partial F}^{-1}$.

Similarly, using the fact that $\sum_{k=4}^{k_{\max}} k \leq k_{\max}^2 \leq \log(\cH_{d-1}(\partial F))^2$,
\[
\begin{aligned}
B = \sum_{k=4}^{k_{\max}} (2^k)^{d-1} \log(2^k) \# \cW_k 
&\leq C_1
\cH_{d-1}(\partial F) \kappa_{\partial F}^{-1} \cdot \sum_{k=4}^{k_{\max}} k \\
&
\lesssim \cH_{d-1}(\partial F) \kappa_{\partial F}^{-1} \log (\cH_{d-1}( \partial F))^2.
\end{aligned}
\]
Therefore, from Lemma \ref{decomp:lem} we deduce that $M_\epsilon(T_{F_+}) \lesssim  A\log(\epsilon^{-1})^d + B\log(\epsilon^{-1})$ for all $\epsilon \in (0,1/2)$, as required.

Since $\cW_-$ is a subcollection of $\cW_+$ we can apply Lemma \ref{decomp:lem} for $\cQ = \cW_-$ in the same manner as above, and deduce that the corresponding estimate holds for $T_{F_-}$. 
\end{proof}

We can determine the approximate location of the eigenvalue of the operator $ T_{F_+}$ or $T_{F_-}$ crossing $\lambda=1/2$ using the following result, which is taken verbatim from \cite{MaRoSp23}. 

\begin{lemma}[Lemma 5.3 of \cite{MaRoSp23}]\label{lem2} 
 
For any trace class operator $0 \leq T \leq 1$, 
\begin{enumerate}
    \item $\lambda_n(T) \leq \frac{1}{2}$ for every $n \geq \lceil \tr(T) \rceil + \max \{ 2 \tr(T-T^2),1\}$
    \item $\lambda_n(T) \geq \frac{1}{2}$ for every $n \leq \lceil \tr(T) \rceil - \max \{ 2 \tr(T-T^2),1\}$.
\end{enumerate}
\end{lemma}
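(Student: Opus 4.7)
The plan is to compare the eigenvalue counting functions to $\tr(T)$ using an elementary pointwise inequality comparing an indicator function to an identity on $[0,1]$, then convert the resulting real-valued bound to an integer-valued one by rounding.

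First I would establish the pointwise bound
\[
|\chi_{[1/2, 1]}(\lambda) - \lambda| \leq 2\lambda(1-\lambda) \quad \text{for all } \lambda \in [0,1].
\]
This is immediate by cases: if $\lambda \geq 1/2$ then $|\chi_{[1/2,1]}(\lambda) - \lambda| = 1 - \lambda \leq 2\lambda(1-\lambda)$ since $2\lambda \geq 1$; if $\lambda < 1/2$ then $|\chi_{[1/2,1]}(\lambda) - \lambda| = \lambda \leq 2\lambda(1-\lambda)$ since $2(1-\lambda) > 1$.

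Next, since $T$ is trace class with spectrum in $[0,1]$, I would apply this bound eigenvalue-by-eigenvalue and sum (or equivalently apply the functional calculus and take traces). Setting $N := \#\{n : \lambda_n(T) \geq 1/2\}$, this yields
\[
|N - \tr(T)| \;=\; \Bigl|\sum_n \chi_{[1/2,1]}(\lambda_n(T)) - \sum_n \lambda_n(T)\Bigr| \;\leq\; \sum_n 2\lambda_n(T)(1 - \lambda_n(T)) \;=\; 2\tr(T - T^2).
\]
The counting function $N$ encodes both claims in the lemma: $\lambda_n(T) \geq 1/2$ precisely when $n \leq N$, while $\lambda_n(T) \leq 1/2$ precisely when $n > N$ (with equality at the boundary automatically satisfying both sides).

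The final step is to translate the real inequality $N \in [\tr(T) - 2\tr(T-T^2),\, \tr(T) + 2\tr(T-T^2)]$ into the stated integer bounds. Since $N \in \mathbb{Z}_{\geq 0}$ and $|\lceil\tr(T)\rceil - \tr(T)| < 1$, I would split into two subcases. When $2\tr(T-T^2) \geq 1$, the bound $N \leq \tr(T) + 2\tr(T-T^2) \leq \lceil \tr(T)\rceil + 2\tr(T-T^2)$ (and the symmetric lower bound) follow directly. When $2\tr(T-T^2) < 1$, integrality of $N$ and the gap $\lceil \tr(T)\rceil - \tr(T) < 1$ together force $N \leq \lceil \tr(T) \rceil + 1$ and $N \geq \lceil \tr(T)\rceil - 1$. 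Taking the worse of the two cases gives the $\max\{2\tr(T-T^2), 1\}$ tolerance in the statement, from which both conclusions follow.

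The step requiring the most care is the integer-rounding bookkeeping at the end, whose sole purpose is to absorb the unit-size discrepancy between $\tr(T)$ and $\lceil \tr(T)\rceil$ in the regime where $\tr(T-T^2)$ is small; there is no analytic obstacle, only the need to track cases.
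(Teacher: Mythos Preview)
Your approach is correct and is essentially the standard argument for this lemma; the paper itself does not supply a proof but simply quotes the result from \cite{MaRoSp23}. The pointwise inequality $|\chi_{[1/2,1]}(\lambda)-\lambda|\le 2\lambda(1-\lambda)$ followed by summation over the spectrum is exactly the right mechanism, and it immediately yields $|N-\tr(T)|\le 2\tr(T-T^2)$.

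One small slip to flag in your bookkeeping: in the case $2\tr(T-T^2)\ge 1$ you assert that ``the symmetric lower bound'' $N\ge \lceil\tr(T)\rceil-2\tr(T-T^2)$ follows directly from $N\ge \tr(T)-2\tr(T-T^2)$. It does not, since $\tr(T)\le\lceil\tr(T)\rceil$ points the wrong way. The fix is to use integrality of the index $n$ rather than of $N$: if $n$ is an integer with $n\le\lceil\tr(T)\rceil-2\tr(T-T^2)$, then because $\lceil\tr(T)\rceil<\tr(T)+1$ one gets $n<\tr(T)-2\tr(T-T^2)+1\le N+1$, hence $n\le N$. A similar remark applies to part 1, where one must show integers $n\ge\lceil\tr(T)\rceil+2\tr(T-T^2)$ satisfy $\lambda_n\le 1/2$; here it is cleanest to run the same pointwise bound for $\chi_{(1/2,1]}$ (which also holds with equality at $\lambda=1/2$) and argue with $N'=\#\{n:\lambda_n>1/2\}$, or else observe that the borderline equality $N=\lceil\tr(T)\rceil+2\tr(T-T^2)$ forces $T$ to be a projection, contradicting $2\tr(T-T^2)\ge 1$. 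These are exactly the ``cases to track'' you anticipated; the analytic content is complete.
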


To continue the proof of Theorem \ref{stageIdecomp:thm}  we need to estimate $\tr(T_{F_\pm})$ and $\tr(T_{F_\pm} - T_{F_{\pm}}^2)$. Using Proposition \ref{prop:SSLO_props}, we have $\tr(T_{F_\pm}) = (2 \pi)^{-d} | F_\pm| |Q_0|  = (2 \pi)^{-d} | F_\pm|$.
By \eqref{cubecount1},
\begin{equation}\label{trace_diff_est:eqn}
\tr(T_{F_+}) - \tr(T_{F_-}) = (2 \pi)^{-d} (| F_+| - |F_-|) = (2 \pi)^{-d} \sum_{Q_F \in \cW_+, \delta_{Q_F} = 16} |Q_F|  \lesssim \frac{\cH_{d-1}( \partial F)}{\kappa_{\partial F}}.
\end{equation}
Combining Lemma \ref{lem6} and Lemma \ref{lem1} for $p=1$, we obtain
\begin{equation}\label{trace_est2:eqn}
\tr(T_{F_\pm} - T_{F_\pm}^2) = \| T_{F_\pm} - T_{F_\pm}^2\|_1 \lesssim \frac{\cH_{d-1}( \partial F)}{\kappa_{\partial F}} \log(\cH_{d-1}( \partial F))^2.
\end{equation}

We finally estimate the function $M_\epsilon(T_F)$. Because $F_- \subset F \subset F_+$, we have the inequality of operators
\begin{equation}\label{ineq_op1:eqn}
T_{F_-} \leq T_F \leq T_{F_+}.
\end{equation}
From \eqref{ineq_op1:eqn}, $\lambda_n(T_{F_-})\leq \lambda_n(T_F)\leq \lambda_n(T_{F_+})$ for all $n$. Therefore, for any $\epsilon \in (0,1/2)$, we can write, 
\begin{align*} 
M_\epsilon(T_F) 
&= 
\# \{ n :  \lambda_n(T_F) \in (\epsilon,1-\epsilon) \} 
\\&\leq 
\#\{n : 1/2 \leq \lambda_n(T_{F_-}) < 1 - \epsilon\}  + \#\{n : \epsilon < \lambda_n(T_{F_+}) \leq  1/2 \} + \#\{n : \lambda_n(T_{F_-})< 1/2  < \lambda_n(T_{F_+})\} \\
& =: E_1 + E_2 + E_3.
\end{align*}
Note that $E_1 \leq \# \{n :  \lambda_n(T_{F_-}) \in (\epsilon,1-\epsilon) \}$  and 
$E_2 \leq \# \{ n :  \lambda_n(T_{F_+}) \in (\epsilon,1-\epsilon) \}$. Thus, by  Lemma \ref{lem6}, 
\[E_1 + E_2  \lesssim  \frac{\cH_{d-1}( \partial F)}{\kappa_{\partial F}} \left[ \log(\epsilon^{-1})^d +  \log (\cH_{d-1}( \partial F))^2 \log(\epsilon^{-1}) \right].
\]
We apply Lemma \ref{lem2} to estimate the term $E_3$: 
\begin{align*}
       E_3 &\leq \#\{n:  \lceil \tr(T_{F_-}) \rceil - \max \{ 2 \tr(T_{F_-} - T_{F_-}^2),1\}  < n < 
       \lceil \tr(T_{F_+}) \rceil  + \max \{ 2 \tr(T_{F_+} - T_{F_+}^2),1\} \\ 
       &\lesssim     \lceil \tr(T_{F_+}) \rceil  -  \lceil \tr(T_{F_-}) \rceil  +  \max \{ 2 \tr(T_{F_+}-T_{F_+}^2),1\}  +  \max \{ 2 \tr(T_{F_-} - T_{F_-}^2),1\}.
\end{align*}
According to \eqref{trace_diff_est:eqn} and \eqref{trace_est2:eqn},
\[
E_3 \lesssim \frac{\cH_{d-1}( \partial F)}{\kappa_{\partial F}} \log(\cH_{d-1}( \partial F))^2 + 1 \lesssim \frac{\cH_{d-1}( \partial F)}{\kappa_{\partial F}} \log(\cH_{d-1}( \partial F))^2.
\]
Applying the bounds on $E_1,E_2$, and $E_3$,
\[
M_\epsilon(T_F)   \lesssim \frac{\cH_{d-1}( \partial F)}{\kappa_{\partial F}} \left[ \log(\epsilon^{-1})^d +  \log (\cH_{d-1}( \partial F))^2 \log(\epsilon^{-1}) \right].
\]
This completes the proof of Theorem \ref{stageIdecomp:thm}. (Recall $\delta_{Q_0} = 1$.)

\section{Frequency domain decomposition}
\label{sec:dd2}

In this section, we complete the proof of Theorem \ref{thm1}.  Fix bounded measurable sets $F,S \subset \R^d$ having maximally Ahlfors regular boundaries with regularity constants $\kappa_{\partial F}$ and $\kappa_{\partial S}$, respectively. We suppose that $\cH_{d-1}( \partial F) \cdot \cH_{d-1}(\partial S) \geq  64^{d-1}$.
By applying a joint rescaling of the form $F \mapsto t F$ and $S \mapsto t^{-1} S$, we may assume without loss of generality that the domains satisfy
\[
\cH_{d-1}(\partial F) = 2^{d-1}, 
\quad 
\cH_{d-1}(\partial S) \geq 32^{d-1}.
\]

The operators $T_{F,S}= B_{S} P_F B_{S}$  and  $T_{S, F}= B_{F} P_{S} B_{F}$ have identical eigenvalue sequences -- see Proposition \ref{prop:SSLO_props}. So, it will be sufficient to estimate the eigenvalues of the second operator. 


We apply Proposition \ref{whitney_truncated:prop} to the set $S$ and let $\cW_+$ be the resulting family $ \cW = \cW(\eta)$ for $\eta = 16$. (Observe that $\eta \leq \frac{1}{2} \cH_{d-1}(\partial S)^{1/(d-1)}$.) Accordingly, $\cW_+$ is a family of cubes having pairwise disjoint interiors and sidelengths that are integer powers of $2$ -- furthermore, 
\begin{equation}\label{sl_bd}
16 \leq \delta_{Q_S} \leq \cH_{d-1}(\partial S)^{1/(d-1)} \mbox{ for all } Q_S \in \cW_+.
\end{equation}
We set $\cW_- := \{ Q_S \in \cW_+ : \delta_{Q_S} \geq 32\}$. According to Proposition \ref{whitney_truncated:prop},  $\cW_+$ is a cover of $S$, the cubes in $\cW_-$ are contained in $S$, and for each integer $k$,
\begin{equation}
\label{cubecount1_new}
\# \{ Q_S \in \cW_+ : \delta_{Q_S} = 2^k \} \leq C_1 \frac{\cH_{d-1}(\partial S)}{\kappa_{\partial S} } 2^{k(1-d)}.
\end{equation}
Thus,
\[
S_{-} := \bigcup_{Q_S \in \cW_-} Q_S \subset S \subset S_+  := \bigcup_{Q_S \in \cW_+} Q_S
\]
Here, $S_{-}$ is an inner approximation of $S$ and $S_+$ is an outer approximation of $S$.

We set 
\[
T_{S_-} := T_{S_-,F} = \sum_{Q_S \in \cW_-} T_{Q_S,F} \mbox{ and } T_{S_+} := T_{S_+,F} = \sum_{Q_S \in \cW_+} T_{Q_S,F}.
\]

We shall apply Lemma \ref{decomp:lem} to estimate the distribution of the eigenvalues of $T_{S_-}$ and $T_{S_+}$. As a result, we obtain:

\begin{lemma} \label{lem7} For any $\epsilon \in (0,1/2)$,
\[
M_\epsilon(T_{S_\pm})  \lesssim  \frac{\cH_{d-1}(\partial S)}{ \kappa_{\partial F} \cdot \kappa_{\partial S}}  \cdot \biggl\{ \log( \cH_{d-1}(\partial S)) \log(\epsilon^{-1})^d + 
\log( \cH_{d-1}(\partial S))^3   \log(\epsilon^{-1}) \biggr\}.
\]
\end{lemma}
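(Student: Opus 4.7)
The plan is to deduce Lemma \ref{lem7} as a direct application of Lemma \ref{decomp:lem}, with the family $\cW_\pm$ playing the role of the cube decomposition of $S_\pm$ and $F$ playing the role of the fixed ``frequency'' set there. The only nontrivial input needed is a component bound on $M_\epsilon(T_{Q_S,F})$ for each $Q_S \in \cW_\pm$.

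To obtain such a component bound, I would combine the eigenvalue symmetry $\lambda_k(T_{Q_S,F}) = \lambda_k(T_{F,Q_S})$ from Proposition \ref{prop:SSLO_props} with Theorem \ref{stageIdecomp:thm} applied to the spatial set $F$ and the frequency cube $Q_0 = Q_S$. The hypothesis $\delta_{Q_S}^{d-1}\,\cH_{d-1}(\partial F) \geq 32^{d-1}$ of Theorem \ref{stageIdecomp:thm} is satisfied because of the normalization $\cH_{d-1}(\partial F) = 2^{d-1}$ together with $\delta_{Q_S} \geq 16$ from \eqref{sl_bd}. After absorbing the dimensional constant $\cH_{d-1}(\partial F) = 2^{d-1}$ and bounding $\log(\delta_{Q_S}^{d-1}\,\cH_{d-1}(\partial F)) \lesssim \log(\cH_{d-1}(\partial S))$ (using $\delta_{Q_S} \leq \cH_{d-1}(\partial S)^{1/(d-1)}$ and $\cH_{d-1}(\partial S) \geq 32^{d-1}$), Theorem \ref{stageIdecomp:thm} yields
\[
M_\epsilon(T_{Q_S,F}) \lesssim \frac{\delta_{Q_S}^{d-1}}{\kappa_{\partial F}}\,\Bigl[\log(\epsilon^{-1})^d + \log(\cH_{d-1}(\partial S))^2\,\log(\epsilon^{-1})\Bigr].
\]

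Next I would feed this component estimate into Lemma \ref{decomp:lem} (with the lemma's ``$S$'' taken to be our $F$, its ``$F$'' taken to be $S_\pm$, and its ``$\cQ$'' taken to be $\cW_\pm$), using the choice $A(\delta) \sim \delta^{d-1}/\kappa_{\partial F}$ and $B(\delta) \sim \delta^{d-1}\,\log(\cH_{d-1}(\partial S))^2/\kappa_{\partial F}$. Grouping the cubes of $\cW_\pm$ by sidelength $\delta_{Q_S} = 2^k$ with $k \in [4,k_{\max}]$ and $k_{\max} := \lfloor \log(\cH_{d-1}(\partial S)^{1/(d-1)}) \rfloor$, the cube count bound \eqref{cubecount1_new} gives $2^{k(d-1)}\,\#\{Q_S\in\cW_\pm : \delta_{Q_S}=2^k\} \lesssim \cH_{d-1}(\partial S)/\kappa_{\partial S}$. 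Summing over $k$ (which contributes a single factor of $k_{\max}\lesssim \log \cH_{d-1}(\partial S)$, as $d\geq 2$) produces
\[
\sum_{Q_S \in \cW_\pm} \delta_{Q_S}^{d-1} \lesssim \frac{\cH_{d-1}(\partial S)\,\log \cH_{d-1}(\partial S)}{\kappa_{\partial S}}.
\]
Substituting this into the aggregate constants $A = \sum_{Q_S} A(\delta_{Q_S})$ and $B = \sum_{Q_S} B(\delta_{Q_S})$ and then invoking the conclusion of Lemma \ref{decomp:lem}, namely $M_\epsilon(T_{S_\pm}) \lesssim A\,\log(\epsilon^{-1})^d + B\,\log(\epsilon^{-1})$, delivers precisely the claimed bound.

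The main conceptual step is the eigenvalue duality via Proposition \ref{prop:SSLO_props}, which recasts the frequency-domain decomposition problem as one to which the spatial-domain intermediate result Theorem \ref{stageIdecomp:thm} applies off the shelf; the rest is essentially bookkeeping and parallels the proof of Lemma \ref{lem6} with the roles of $F$ and $S$ interchanged. The weight $\delta_{Q_S}^{d-1}$ exactly cancels the $2^{k(1-d)}$ factor in the cube count, so the dyadic sum produces only the single logarithmic factor $k_{\max}\lesssim \log \cH_{d-1}(\partial S)$, which accounts for the $\log \cH_{d-1}(\partial S)$ and $\log \cH_{d-1}(\partial S)^3$ prefactors in the two terms of the stated bound.
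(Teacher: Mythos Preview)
Your proposal is correct and follows essentially the same route as the paper: invoke Proposition \ref{prop:SSLO_props} to swap the roles of space and frequency, apply Theorem \ref{stageIdecomp:thm} to each component $T_{Q_S,F}$, and then feed the resulting bounds into Lemma \ref{decomp:lem} with the cube count \eqref{cubecount1_new}. The only cosmetic difference is that the paper keeps $B(\delta)=\delta^{d-1}\log(\delta)^2\kappa_{\partial F}^{-1}$ and picks up $\sum_{k\le k_{\max}}k^2\lesssim \log(\cH_{d-1}(\partial S))^3$ in the dyadic sum, whereas you bound $\log(\delta_{Q_S})\lesssim \log(\cH_{d-1}(\partial S))$ upfront and collect $\log(\cH_{d-1}(\partial S))^2\cdot k_{\max}$; both yield the same $\log^3$ factor.
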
 
\begin{proof}

For each $Q_S \in \cW_+$, we have $\delta_{Q_S}^{d-1} \cH_{d-1}( \partial F ) \geq 16^{d-1} 2^{d-1} = 32^{d-1}$. The operator $T_{Q_S,F} =  B_F P_{Q_S} B_F$ has the same eigenvalues as the operator $T_{F,Q_S} = B_{Q_S} P_F B_{Q_S}$, and thus, by Theorem \ref{stageIdecomp:thm}, we have
\begin{equation}\label{comp_bound1_new:eqn}
M_\epsilon(T_{Q_S,F}) \lesssim  
\delta_{Q_S}^{d-1} \kappa_{\partial F}^{-1} \left[ \log(\epsilon^{-1})^d +  \log(\delta_{Q_S})^2 \log(\epsilon^{-1}) \right] \;\; \mbox{ for all } Q_S \in \cW_+, \; \epsilon \in (0,1/2).
\end{equation}
(Recall we have normalized $\cH_{d-1} (\partial F) = 2^{d-1}$.)

Thanks to \eqref{sl_bd} we can write $ \cW_+ = \bigcup_{4 \leq k \leq k_{\max}} \cW_k$, where $\cW_k$ is the set of  cubes in $\cW_+$ of sidelength $2^k$, and
\begin{equation}
    \label{k_max:defn2}
    k_{\max} := \lfloor \log ( \cH_{d-1}(\partial S)^{1/(d-1)}) \rfloor.
\end{equation}
Observe that $k_{\max} \leq \log (\cH_{d-1}(\partial S))$. According to the definition of $\cW_-$, also $\cW_- = \bigcup_{5 \leq k \leq k_{\max}} \cW_k$.

Condition \eqref{component_plunge_assump_new} of Lemma \ref{decomp:lem} is satisfied for $\cQ = \cW_+$, when the frequency domain $S$ in Lemma \ref{decomp:lem} is taken to be the spatial domain $F$ here, with $A(\delta) = \delta^{d-1} \kappa_{\partial F}^{-1}$, and $B(\delta) = \delta^{d-1} \log(\delta)^2 \kappa_{\partial F}^{-1}$; see \eqref{comp_bound1_new:eqn}. It is required to estimate $A:= \sum_{Q_S \in \cW_+} A(\delta_{Q_S})$ and $B := \sum_{Q_S \in \cW_+} B(\delta_{Q_S})$. Write $A = \sum_{k=4}^{k_{\max}} \# \cW_k \cdot A(2^k)$, and use the bound $\#\cW_k \leq C_1 \cH_{d-1}(\partial S) \kappa_{\partial S}^{-1} 2^{k(1-d)}$ given in \eqref{cubecount1_new}, to obtain
\[
\begin{aligned}
A = \sum_{k=4}^{k_{\max}} (2^k)^{d-1} \kappa_{\partial F}^{-1} \cdot \#\cW_k &\leq C_1 \cH_{d-1}(\partial S) \kappa_{\partial S}^{-1} \cdot \kappa_{\partial F}^{-1} \cdot \sum_{k=4}^{k_{\max}} 1 \\
&\lesssim \cH_{d-1}(\partial S) \kappa_{\partial S}^{-1} \cdot \kappa_{\partial F}^{-1} \cdot \log(\cH_{d-1}(\partial S)).
\end{aligned}
\]
Similarly,
\[
\begin{aligned}
B = \sum_{k=4}^{k_{\max}} (2^k)^{d-1} \log(2^k)^2 \kappa_{\partial F}^{-1}  \cdot \# \cW_k &\leq C_1  \cH_{d-1}(\partial S) \kappa_{\partial S}^{-1} \cdot \kappa_{\partial F}^{-1} \cdot \sum_{k=4}^{k_{\max}} k^2 \\
& \lesssim \cH_{d-1}(\partial S) \kappa_{\partial S}^{-1} \cdot \kappa_{\partial F}^{-1} \cdot \log(\cH_{d-1}(\partial S))^3.
\end{aligned}
\]
Then, from Lemma \ref{decomp:lem} we deduce that $M_\epsilon(T_{S_+}) \lesssim  A \log(\epsilon^{-1})^d + B  \log(\epsilon^{-1})$ for all $\epsilon \in (0,1/2)$, as desired.

Similarly, we apply Lemma \ref{decomp:lem} for the family $\cQ = \cW_-$, to deduce that the corresponding estimate holds for $T_{S_-}$. 
\end{proof}

As in the previous section, we now prepare to apply Lemma \ref{lem2} to the operators $S = T_{S_\pm}$. We first derive the required estimates on $\tr(T_{S_\pm})$ and $\tr(T_{S_\pm} - T_{S_\pm}^2)$. 

First, we have
\begin{equation}\label{trace_est1_new:eqn}
    \tr(T_{S_\pm}) = (2 \pi)^{-d} | S_\pm| |F|
\end{equation}
By the isoperimetric inequality, $|F| \lesssim \cH_{d-1}(\partial F)^{d/(d-1)} \lesssim 1$. Therefore, by \eqref{cubecount1_new},
\begin{equation}\label{trace_diff_est_new:eqn}
\begin{aligned}
    \tr(T_{S_+}) - \tr(T_{S_-}) &=(2 \pi)^{-d} (| S_+| - |S_-|) | F| = (2 \pi)^{-d} \sum_{Q_S \in \cW_+, \delta_{Q_S} = 16} |Q_S| \cdot | F| \lesssim  \frac{\cH_{d-1}(\partial S)}{\kappa_{\partial S}}.
\end{aligned}
\end{equation}
Combining Lemma \ref{lem7} and Lemma \ref{lem1} for $p=1$, we obtain
\begin{equation}\label{trace_est2_new:eqn}
\tr(T_{S_\pm} - T_{S_\pm}^2) = \| T_{S_\pm} - T_{S_\pm}^2\|_1  \lesssim  \frac{\cH_{d-1}(\partial S)}{\kappa_{\partial S}}
\kappa_{\partial F}^{-1} \log(\cH_{d-1}(\partial S))^3.
\end{equation}

We abbreviate $T_{S} = T_{S,F}$. Because $S_- \subset S \subset S_+$, we have the inequality of operators
\[
T_{S_-} \leq T_{S} \leq T_{S_+}.
\]
Therefore, $ \lambda_n(T_{S_-}) \leq \lambda_n (T_{S}) \leq \lambda_n(T_{S_+})$ for all $n$. We replicate the argument at the end of the previous section -- specifically, we use  Lemma \ref{lem7}, and apply Lemma \ref{lem2} to the operators $S = T_{S_\pm}$ using the estimates \eqref{trace_diff_est_new:eqn} and \eqref{trace_est2_new:eqn} -- and obtain that
\[
M_\epsilon(T_{S}) \lesssim  \frac{\cH_{d-1}(\partial S)}{\kappa_{\partial S}} \kappa_{\partial F}^{-1} \cdot \biggl\{ \log( \cH_{d-1}(\partial S)) \log(\epsilon^{-1})^d  + \log (\cH_{d-1}(\partial S))^3 \log(\epsilon^{-1}) \biggr\} .
\]
Because the operators $T_{S} = T_{S,F} = B_F P_{S} B_F$ and $T_{F, S} = B_{S} P_F B_{S}$ have identical eigenvalue sequences, this completes the proof of Theorem \ref{thm1}. (Recall we have normalized $\cH_{d-1}(\partial F) = 2^{d-1}$.)

\section{Proof of Theorem \ref{thm2}}\label{sec:pfrem1}

We reproduce the argument in \cite{MaRoSp23} with slight modifications to account for the different forms of our estimates. Let $T := T_{F,S}$. By applying Lemma \ref{lem1} (for $p=1$) and Theorem \ref{thm1}, there exists a constant $C_d > 0$ such that
\[
\tr(T - T^2) \leq C_d \frac{ \cH_{d-1}(\partial F) \cH_{d-1}(\partial S)}{\kappa_{\partial F} \kappa_{\partial S}} \log(\cH_{d-1}(\partial F) \cH_{d-1}(\partial S))^3 =: H(F,S).  
\]
From Proposition \ref{prop:SSLO_props} and Lemma \ref{lem2}, we deduce that 
\begin{equation}\label{eqn:crossing_index}
\left\{
\begin{aligned}
n \geq \lceil (2 \pi)^{-d} |S| \cdot |F| \rceil + \max\{ 2 H(F,S), 1 \}  =: H_1(F,S) \implies \lambda_n(T) \leq \frac{1}{2},\\
n \leq \lceil (2 \pi)^{-d} |S| \cdot |F| \rceil - \max\{ 2 H(F,S), 1 \} =: H_2(F,S) \implies \lambda_n(T) \geq \frac{1}{2}.
\end{aligned}
\right.
\end{equation}
For $\epsilon \in (0,1)$, define $\epsilon_0 := \min \{\epsilon, 1-\epsilon\} \leq 1/2$, and let $0 < \tau < \epsilon_0$. Then, for $Z_\tau(T) := \{ n \in \N : \lambda_n(T) \in (\tau,1-\tau)\}$, \eqref{eqn:crossing_index} gives that
\[
\{1,\cdots, \lfloor H_1(F,S) \rfloor \} \setminus Z_\tau(T) \subset \{n \in \N : \lambda_n(T) > \epsilon \} \subset \{1,\cdots, \lceil H_2(F,S) \rceil \} \cup Z_\tau(T),
\]
where by convention $\{1,\cdots, \lceil H_2(F,S) \rceil \} = \emptyset$ if $H_2(F,S) < 0$. Consequently,
\[
H_2(F,S) - 1 - \#( Z_\tau(T)) \leq \# \{n \in \N : \lambda_n(T) > \epsilon \} \leq H_1(F,S) + 1 + \#(Z_\tau(T)).
\]
By definition of the eigenvalue counting function, $M_\tau(T) =\#(Z_\tau(T))$. Thus, by rearranging the previous expression, we can apply Theorem \ref{thm1} to give that
\[
\begin{aligned}
\left| \# \{n \in \N : \lambda_n(T) > \epsilon \} - (2\pi)^{-d} |F| \cdot |S| \right| & \lesssim \frac{\cH_{d-1}(\partial F)}{\kappa_{\partial F}} \frac{\cH_{d-1}(\partial S)}{\kappa_{\partial S}}  \biggl\{  \log\left(\cH_{d-1}(\partial F) \cdot \cH_{d-1}(\partial S)\right)  \log(\tau^{-1})^d \\
& + \log \left(\cH_{d-1}(\partial F) \cdot \cH_{d-1}(\partial S)\right)^3 \log(\tau^{-1}) \biggr\}.
\end{aligned}
\]
Letting $\tau \rightarrow \epsilon_0$ yields \eqref{eqn:main2}.

\addcontentsline{toc}{section}{References}

\printbibliography

\end{document}